\documentclass[leqno]{amsart}

\usepackage[english]{babel} 									
\usepackage[T1]{fontenc}							
\usepackage[utf8]{inputenx}
				
\usepackage{mathtools}								
\usepackage{empheq}									
\usepackage{enumitem}								

\usepackage{amssymb}								
\usepackage{mathrsfs}								
\usepackage{bm}                                     
				
\usepackage{verbatim}                               
\usepackage{iflang}									
\usepackage{xifthen}								

\usepackage[final]{pdfpages}						
\usepackage{todonotes}								
\usepackage{aliascnt}								
\usepackage{needspace}								
\usepackage{refcount}

\usepackage{subcaption}			
\usepackage{placeins} 			
\usepackage{grffile}			
\usepackage[all]{xy}			
\usepackage{stmaryrd}
\usepackage{comment}

\usepackage{transparent}
\usepackage{color}				

\usepackage{tikz}               
\usetikzlibrary{math,arrows.meta, cd}
\DeclareGraphicsExtensions{.pdf,.png,.jpg}

\usepackage{graphicx}			
 
\usepackage{diagbox} 

\usepackage[babel]{csquotes}							

\usepackage{nameref}									
\usepackage[colorlinks	=	true,						
          	 	linkcolor	=	red,
            	urlcolor	=	red,
            	citecolor	=	red]%
            	{hyperref}
\usepackage{bookmark}									
\usepackage{cleveref}									
\usepackage{microtype}

\numberwithin{equation}{section}

\newtheorem{theorem}{Theorem}[section]

\newtheorem{lemma}[theorem]{Lemma}
\newtheorem{proposition}[theorem]{Proposition}

\newtheorem*{theorem*}{Theorem}

{\bf}{\it}

\theoremstyle{remark}

\newtheorem*{remark*}{Remark}

\theoremstyle{remark}

\newcommand{\N}{\mathbb{N}}
\renewcommand{\S}{\mathbb{S}}

\newcommand{\R}{\mathbb{R}}
\newcommand{\Q}{\mathbb{Q}}
\newcommand{\Z}{\mathbb{Z}}

\newcommand{\cH}{\mathcal{H}}
\newcommand{\cL}{\mathcal{L}}
\newcommand{\st}{\textup{st}}

\DeclareMathOperator{\sys}{sys}

\DeclareMathOperator{\diam}{diam}

\DeclareMathOperator{\im}{Im}
\DeclareMathOperator{\interior}{int}

\allowdisplaybreaks

\newcommand{\norm}[1]{ \left\Vert #1 \right\Vert }	
\newcommand{\apmd}[2][]{							
	\ifthenelse{\equal{#1}{}}%
					{ \operatorname{N}_{#2}	}%
					{ \operatorname{N}_{#1,#2} 	}}

\newcommand{\aint}[2][]{
	\ifthenelse{\equal{#1}{}}%
					{%
\mathchoice%
      {\mathop{\kern 0.2em\vrule width 0.6em height 0.69678ex depth -0.58065ex
              \kern -0.8em \intop}\nolimits_{\kern -0.45em#2}^{#1}}%
      {\mathop{\kern 0.1em\vrule width 0.5em height 0.69678ex depth -0.60387ex
              \kern -0.6em \intop}\nolimits_{#2}^{#1}}%
      {\mathop{\kern 0.1em\vrule width 0.5em height 0.69678ex depth -0.60387ex
              \kern -0.6em \intop}\nolimits_{#2}^{#1}}%
      {\mathop{\kern 0.1em\vrule width 0.5em height 0.69678ex depth -0.60387ex
              \kern -0.6em \intop}\nolimits_{#2}^{#1}}}%
					{%
\mathchoice%
      {\mathop{\kern 0.2em\vrule width 0.6em height 0.69678ex depth -0.58065ex                                              
              \kern -0.8em \intop}\nolimits_{\kern -0.45em#1}^{#2}}%
      {\mathop{\kern 0.1em\vrule width 0.5em height 0.69678ex depth -0.60387ex
              \kern -0.6em \intop}\nolimits_{#1}^{#2}}%
      {\mathop{\kern 0.1em\vrule width 0.5em height 0.69678ex depth -0.60387ex
              \kern -0.6em \intop}\nolimits_{#1}^{#2}}%
      {\mathop{\kern 0.1em\vrule width 0.5em height 0.69678ex depth -0.60387ex
              \kern -0.6em \intop}\nolimits_{#1}^{#2}}}}

\title[Systolic inequalities for metric surfaces]{Systolic inequalities for metric surfaces via filling minimality}

\keywords{Systolic inequalities, metric manifolds, Finsler metrics, extremal metrics, uniformization}
\subjclass[2020]{Primary 53C23; Secondary 53C60, 28A75}
\thanks{The authors were supported by the Swiss National Science Foundation grant 212867.}

\author{Toni Ikonen}
\author{Denis Marti}
\author{Noa Vikman}

\address{Department of Mathematics\\ University of Fribourg\\  Chemin du Mus\'ee 23\\  1700 Fribourg, Switzerland}
\email{toni.ikonen@unifr.ch}
\address{Department of Mathematics\\ University of Fribourg\\  Chemin du Mus\'ee 23\\  1700 Fribourg, Switzerland}
\email{denis.marti@unifr.ch}
\address{Department of Mathematics\\ University of Fribourg\\  Chemin du Mus\'ee 23\\  1700 Fribourg, Switzerland}
\email{noa.vikman@unifr.ch}

\begin{document}

\begin{abstract}
    We prove optimal systolic inequalities for length spaces homeomorphic to a torus of genus one or a real projective plane. In both cases, the optimal constant coincides with the constant from the (reversible) Finsler setting. This generalizes the classical results for Riemannian and Finsler surfaces.

    The proof of the inequality for the torus relies on an analysis of the asymptotic volume growth of the universal cover, together with a strengthening of the previously known area minimality of two-dimensional normed planes. For the inequality of the real projective plane, we similarly extend a minimality result of hemispheres. These results build upon works of Burago--Ivanov and Ivanov, respectively. In their proofs we apply recent uniformization theorems for metric disks and the theory of area-minimizing disks in metric spaces due to Lytchak--Wenger.

\end{abstract}

\maketitle

\section{Introduction}

\subsection{Background}
    The systole of a Riemannian manifold $(M,g)$ is defined as the smallest length of a non-contractible closed curve in $M$. That is,
    $$\sys(M,g) = \inf\{ \ell(\gamma) \colon \gamma \textup{ is a non-contractible closed curve in }M\}.$$
    
    Clearly, this definition directly extends to Finsler manifolds $(M,F)$. The central theme of systolic geometry is the relationship between the systole of a space and its volume in terms of so-called systolic inequalities. The first optimal systolic inequality was proven for the $2$-dimensional (genus one) torus in unpublished work by Loewner.

    \medskip

    \textbf{Loewner's systolic inequality.} Let $(\mathbb{T}, g)$ be a $2$-dimensional torus with a Riemannian metric. Then,
    $$\textup{Area}(\mathbb{T}, g) \geq  \frac{\sqrt{3}}{2} \sys(\mathbb{T}, g)^2.$$
    Equality holds if and only if $(\mathbb{T}, g)$ is isometric to the quotient of the Euclidean plane by some hexagonal lattice.      

    \medskip
    
    The proof is a classical application of the uniformization theorem, which reduces the situation to the flat case, where the inequality follows from elementary methods; see e.g. \cite[Theorem 4]{Balacheff-Gil-Moreno-de-Mora-finsler-tori}. Loewner's systolic inequality first appeared in the work of his student Pu \cite{Pu}, where the optimal systolic inequality for the projective plane was proven. 

    \medskip
    
    \textbf{Pu's systolic inequality.} Let $(\mathbb{RP}^2, g)$ be a $2$-dimensional real projective plane with a Riemannian metric. Then,
    $$\textup{Area}(\mathbb{RP}^2, g) \geq  \frac{2}{\pi} \sys(\mathbb{RP}^2, g)^2.$$
    Equality holds if and only if the metric $g$ has constant curvature.  

    \medskip

    Together with the systolic inequality for the Klein bottle \cite{Bavard}, these are the only known optimal systolic inequalities for general closed Riemannian manifolds; see also \cite{Jabbour-Sabourau-sphere}. If additional assumptions are imposed on the space, for example, on the curvature, then more can be said; see e.g. \cite{Ivanov-katz-loewner,Jabbour-Sabourau-negatively,felix-1984-on-the-first-eigenvalue-of-the-Laplace-operator-on-selected-examples-of-compact-Riemann-surfaces,parlier-2008-fixed-point-free-involutions-on-riemann-surfaces,Katz-book, katz-sabourau-cat0,Katz-Sabourau-extremal,katz-Sabourau-Nonpositively}.
    
    \medskip

    We now discuss known results in the (reversible) Finsler setting. 
    In this case, the optimal systolic inequalities depend on the choice of Finsler area (volume). 
    The most prominent ones are the Busemann--Hausdorff area and the Holmes--Thompson area; for these, the optimal systolic inequalities are only known for the projective plane \cite{ivanov-systolic-ineq-proj} and the torus \cite{Balacheff-Gil-Moreno-de-Mora-finsler-tori, Sabourau-finsler-tori}. Additionally, there are partial results for the Klein bottle \cite{Sabourau-finsler-klein}. The optimal systolic inequality with respect to the Busemann--Hausdorff area $\textup{Area}_\mathrm{BH}(\mathbb{T},F)$ was recently proved for Finslerian tori; cf. \cite[Theorem 1]{Balacheff-Gil-Moreno-de-Mora-finsler-tori}.

    \medskip
    
    \textbf{Systolic inequality for Finsler tori.} Let $(\mathbb{T}, F)$ be a $2$-dimensional torus with a Finsler metric. Then,
    $$\textup{Area}_\mathrm{BH}(\mathbb{T}, F) \geq  \frac{\pi}{4} \sys(\mathbb{T}, F)^2.$$
    Equality holds for the flat metric corresponding to the supremum norm.    
    
    \medskip
    
    We also recall the following Finsler version of Pu's systolic inequality due to Ivanov \cite[Theorem 3]{ivanov-systolic-ineq-proj}.
    
    \medskip 
    
    \textbf{Systolic inequality for Finsler real projective planes.} Let $(\mathbb{RP}^2, F)$ be a real projective plane $\mathbb{RP}^2$ equipped with a Finsler metric. Then,
        $$\textup{Area}_\mathrm{BH}(\mathbb{RP}^2, F) \geq  \frac{2}{\pi} \sys(\mathbb{RP}^2, F)^2.$$
        Equality holds if and only if $F$ is a Riemannian metric of constant curvature.

    \medskip

    Another fundamental systolic inequality that applies to manifolds in every dimension was proven by Gromov \cite{gromov-filling}. It states that any essential, closed, connected $n$-dimensional Riemannian manifold admits a systolic inequality with a constant depending only on $n$. Recently, Gromov's systolic inequality has been generalized to metric spaces \cite{Filling-metric-spaces} using an approach different from Gromov's original proof, based on the relationship between the Uryson width of a space and its Hausdorff content; see also \cite{Papasoglu-Uryson}. Interestingly, this approach also led to an improvement of the constant in Gromov's systolic inequality for Riemannian manifolds \cite{Nabutosvsky-linear}. Continuing this line of research, we prove metric versions of the optimal systolic inequalities for tori and real projective planes.

\subsection{Main results}
    The systole can be defined in metric spaces as well. Indeed, if $(X,d)$ is a metric space, we define the systole by
    \begin{equation}
        \sys(X,d) = \inf\big\{ \ell(\gamma) \colon \gamma \textup{ is a non-contractible closed curve in }X\big\}.
    \end{equation}
    Notice that every Riemannian or Finsler manifold can be understood as a metric space by passing to the intrinsic length distance $d_{\mathrm{int}}$, and the various definitions of systole coincide in this setting. Our aim is to extend systolic inequalities to length spaces homeomorphic to a $2$-dimensional torus or the real projective plane, and thus, we need a notion of area. To achieve this, we equip such a space with the two-dimensional Hausdorff measure $\mathcal{H}_d^2$, and write $\mathrm{Area}(X,d) = \mathcal{H}^{2}_{d}( X )$. We recall that the two-dimensional Hausdorff measure coincides with the Riemannian volume on a Riemannian manifold and the Busemann--Hausdorff area of a Finsler manifold. In particular, we can reformulate the systolic inequalities for Riemannian or Finsler surfaces by interpreting them as length spaces, as described above.
    
    We now formulate our first main result.

    \begin{theorem}\label{theorem-main-result}
        Let $(\mathbb T,d)$ be a length space homeomorphic to a $2$-dimensional torus. Then
        \begin{align*}
           \textup{Area}(\mathbb{T},d) \geq  \frac{\pi}{4} \sys( \mathbb{T},d )^2.
        \end{align*}
        Equality holds if $(\mathbb T,d)$ is isometric to $\R^2/\mathbb{Z}^2$, equipped with the supremum norm.
    \end{theorem}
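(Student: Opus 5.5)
We prove the inequality by passing to the universal cover $\pi\colon \R^2 \to \mathbb T$, lifting $d$ to a length metric $\tilde d$ that is invariant under the deck group $\Gamma \cong \Z^2$, and comparing the area growth of large balls in $(\R^2,\tilde d)$ with the area of a limiting normed plane. Following Burago--Ivanov, the rescaled spaces $(\R^2, \tilde d/n)$ converge, by a Burago-type asymptotic estimate, to a normed plane $V=(\R^2,\|\cdot\|)$. Here $\|\cdot\|$ is the stable norm on $H_1(\mathbb T;\R)\cong \R^2$, and it satisfies $\|v\| = \lim_k \tilde d(x, x+kv)/k$ for $v \in \Z^2$. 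Every nonzero $v\in\Z^2$ corresponds to a non-contractible class, and every closed curve representing $kv$ has length at least $k\|v\|$. Hence $\|v\| \ge \sys(\mathbb T,d)$ for all $v\in \Z^2\setminus\{0\}$; this holds for the stable norm even if $kv$ is not primitive, because $\|kv\|=k\|v\|$ and the lower bound comes from minimizing over loops in the primitive class.

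\textbf{Normed-plane step.} For a two-dimensional normed space whose integer lattice has minimum norm $\ge s$, the Busemann--Hausdorff area of a fundamental domain is at least $\frac{\pi}{4}s^2$. This is the finite-dimensional (flat Finsler) case of the Finsler torus inequality recalled above; equivalently it follows from Minkowski's theorem together with the fact that the Hausdorff normalization gives the unit ball area $\pi$. Consequently $\mathcal H^2_{\|\cdot\|}(\text{fundamental domain}) \ge \frac{\pi}{4}\sys^2$.

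\textbf{Filling-minimality step.} This is the main obstacle. We must show that $\mathrm{Area}(\mathbb T,d) \ge \mathcal H^2_{\|\cdot\|}(\R^2/\Z^2)$, i.e.\ that flat normed planes have minimal filling area among all metrics that are asymptotically at least as large. Concretely, let $D_n$ be the lift of $n^2$ fundamental domains forming a large parallelogram. Then $\mathcal H^2_{\tilde d}(D_n) = n^2 \,\mathrm{Area}(\mathbb T,d)$, and the boundary curve $\partial D_n$, rescaled by $1/n$, is close to a parallelogram in $V$. We need a strengthened Burago--Ivanov theorem: if $(Z,\rho)$ is a metric disk whose boundary is a Jordan curve, and a $1$-Lipschitz-type comparison $\rho \ge \|\cdot\| - o(n)$ holds on boundary points, then $\mathcal H^2_\rho(Z) \ge \mathcal H^2_{\|\cdot\|}(\text{corresponding planar region}) - o(n^2)$. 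We plan to prove this as follows.
\begin{enumerate}
\item Replace $D_n$ by a Lytchak--Wenger energy/area minimizing disk with the same boundary. This does not increase the Hausdorff area, it gives a quasiconformal parametrization, and via the recent uniformization theorems for metric disks it reduces to the case of a Sobolev map from the disk.
\item Show that almost every approximate metric derivative is a seminorm whose Busemann--Hausdorff Jacobian controls area.
\item Apply the Burago--Ivanov argument: a $1$-Lipschitz map to $V$, constructed from distance functions to lines of $V$ (Busemann-type coordinates), combined with the fact that the Busemann--Hausdorff area in dimension two is "semi-elliptic" (Ivanov), so that area can only decrease under such maps and the image covers the target parallelogram by degree.
\end{enumerate}
The delicate points are the error terms from the sublinear defect $o(n)$ and the fact that $d$ is only a length metric, so area must be taken as Hausdorff measure throughout. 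For the latter we would first approximate $d$, or work directly with Sobolev disks so that no smoothness is needed.

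\textbf{Conclusion and equality case.} Dividing by $n^2$ and letting $n\to\infty$ gives $\mathrm{Area}(\mathbb T,d) \ge \mathcal H^2_{\|\cdot\|}(\R^2/\Z^2) \ge \frac{\pi}{4}\sys^2$. For equality, take $\R^2/\Z^2$ with the sup norm. Its systole is $1$, the Hausdorff area of the unit square is $\frac{\pi}{4}\cdot\frac{\mathcal L^2(\text{square of side }1)\cdot 4}{4}$, and since the sup-norm unit ball has Lebesgue measure $4$ and Hausdorff normalization gives it area $\pi$, the area of the unit square is $\pi/4$. This confirms equality.
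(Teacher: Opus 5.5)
Your outline matches the paper: stable norm, $\|v\|_{\st}\ge\sys(\mathbb T,d)$, comparison of $\cH^2$ with the flat torus $(\R^2/\Z^2,\|\cdot\|_{\st})$, then Minkowski. The normed-plane step and the equality example are fine. The gap is in step (3) of the filling-minimality step, which carries all the weight.

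You want a $1$-Lipschitz map from $(\R^2,\tilde d)$ into the two-dimensional plane $V=(\R^2,\|\cdot\|_{\st})$, at bounded distance from the identity and built from Busemann-type functions. Such a map does not exist in general. Calibrating functions give, for \emph{each} $h$ in the dual unit sphere $S$, a $1$-Lipschitz $f_h$ with $f_h=h$ on $\Z^2$. But $F\colon\R^2\to V$ is $1$-Lipschitz only if $h\circ F$ is $1$-Lipschitz for \emph{all} $h\in S$ simultaneously, so the calibrators would have to depend linearly on $h$. This already fails for a non-flat torus of revolution $dr^2+\rho(r)^2d\theta^2$. For each Clairaut constant $|c|<\min\rho$, any calibrator of the corresponding class is forced to have differential $\sqrt{1-c^2/\rho^2}\,dr+c\,d\theta$, and these forms are not linear in the class. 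Using only two calibrators $f_{h_1},f_{h_2}$ gives a $1$-Lipschitz map only for the weaker norm $\max(|h_1|,|h_2|)$. Its area bound is strictly worse unless the unit ball of $\|\cdot\|_{\st}$ is a parallelogram.

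A symptom of the problem: with a two-dimensional target you would need no ellipticity at all, since $\cH^2$ does not increase under $1$-Lipschitz maps and a degree argument would finish. The paper instead collects all calibrators into $\Phi\colon(\R^2,d)\to\ell^\infty(S)$, $x\mapsto(f_h(x))_{h\in S}$. This agrees on $\Z^2$ with the linear isometric embedding $\Psi\colon V\to\ell^\infty(S)$, $x\mapsto(h(x))_{h\in S}$ (\Cref{lemma-embedding-via-calibrating-functions}). The image of a large domain is then a non-flat surface in an infinite-dimensional Banach space. The real input is minimality of flat Jordan domains there for metric disks (\Cref{proposition-minimality-of-two-dimensional-planes}). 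That consists of Burago--Ivanov's finite-dimensional theorem, transferred to $\ell^\infty$ by truncation (\Cref{proposition-minimality-of-two-dimensional-planes-affine}), combined with a uniformizing Sobolev map $u$ of the disk satisfying $\mathrm{Area}(u)\le\cH^2$. The Lytchak--Wenger minimizer of your step (1) and the analysis of step (2) are not needed.

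Your ``strengthened theorem'' with an $o(n)$ comparison on boundary points is also left unproved. It is not in a form to which minimality of planes applies: that result requires the $1$-Lipschitz map to send the boundary of the disk onto the flat boundary $\Psi(\partial S_n)$ exactly, as a uniform limit of homeomorphisms. The paper arranges this in two steps.
First, it replaces $S_n$ by the region $X_n$ bounded by a Jordan curve $\gamma_n$ assembled from lifts of shortest loops in the classes $(1,0)$ and $(0,1)$. This makes $\gamma_n$ rectifiable, $1$-Lipschitz on each edge of $(\partial S_n,\|\cdot\|_{\st})$, and within bounded distance of $\partial S_n$ (\Cref{lemma-curve-close-to-grid}). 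A union of lifted fundamental domains need not have rectifiable boundary for a general length metric.
Second, it glues a collar $\partial S_n\times[0,D']$ of area $O(n)$, on which the linear interpolation between $\Phi\circ\gamma_n$ and $\Psi$ is $1$-Lipschitz (\Cref{lemma:metric-surfaces-construction}). All errors are then $O(n)$, hence $o(n^2)$.

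Finally, in your systole step the ``hence'' is a non sequitur. The bound $\tilde d(x,x+kv)\ge k\|v\|$ bounds lengths from below by $\|v\|$. What you need is $\tilde d(x,x+kv)\ge k\min_y\tilde d(y,y+v)\ge k\,\sys(\mathbb T,d)$. That is the torus-specific fact that iterates of shortest loops are shortest (\Cref{lemma-homotopy-class-minimizer}), which gives $\|z\|_{\st}=\min_x d(x,x+z)$.
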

    We emphasize that this result directly generalizes the systolic inequality for Finsler tori.
    
    Our second main theorem extends Pu's systolic inequality to the metric setting.
    \begin{theorem}\label{theorem-main-result-real-projective-plane}
        Let $(\mathbb{RP}^2,d)$ be a length space homeomorphic to the real projective plane. Then
        \begin{align*}
           \textup{Area}(\mathbb{RP}^2,d)\geq  \frac{2}{\pi} \sys( \mathbb{RP}^2,d )^2.
        \end{align*}
        Equality holds if $(\mathbb{RP}^2,d)$ is isometric to the real projective plane equipped with a Riemannian metric of constant curvature.
    \end{theorem}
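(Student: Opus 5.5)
We follow Ivanov's Finsler argument for $\mathbb{RP}^2$ and replace its smooth ingredients by metric ones; the abstract indicates that the core is a metric version of filling minimality of hemispheres. First reduce to the case of finite area. If $\mathcal{H}^2_d(\mathbb{RP}^2)=\infty$ there is nothing to prove. Otherwise, normalize $\sys(\mathbb{RP}^2,d)=\pi$; the claim becomes $\mathrm{Area}\ge 2\pi$.

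Pass to the universal cover $p\colon S\to\mathbb{RP}^2$. Here $S$ is a length space homeomorphic to $\S^2$, with the lifted length metric $\tilde d$ and a deck involution $\sigma$ acting by isometries. Then $\mathcal{H}^2(S)=2\,\mathrm{Area}(\mathbb{RP}^2)$, so we must show $\mathcal{H}^2(S)\ge 4\pi$. The systole condition translates to $\tilde d(x,\sigma x)\ge\pi$ for all $x$, since a path from $x$ to $\sigma x$ projects to a non-contractible loop. Ivanov's strategy is to view $S$ as a filling of a circle by a pair of complementary disks, and to compare $S$ with the round sphere through a minimality statement. The hemisphere statement we aim for is:

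\emph{If $D$ is a length space homeomorphic to a closed disk, and the boundary circle, identified with the equator $\S^1$ of length $2\pi$, satisfies $d_D(x,y)\ge d_{\S^1}(x,y)$ for boundary points, then $\mathcal{H}^2(D)\ge 2\pi$.}

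To use it, cut $S$ along a $\sigma$-invariant simple closed curve $\gamma$ with $\ell(\gamma)=2\pi$ and with $\tilde d$ restricted to $\gamma$ dominating the intrinsic circle metric of length $2\pi$. Each of the two complementary disks then has area at least $2\pi$, which gives the bound. If no such curve exists, we instead use Ivanov's averaging argument. Here the boundary circle is allowed to be any $\sigma$-invariant curve, and it is parametrized by the distance-to-$\sigma$ structure. Exhibiting such a curve from the systole condition is exactly where Ivanov's argument applies without change; it only uses lengths and distances, so it carries over to length spaces. The remaining obstacle is therefore the metric hemisphere minimality.

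The plan for the hemisphere statement is to reduce it to Ivanov's Finsler or Riemannian version. Take the finite-area disk $D$ and apply the Lytchak--Wenger theory: solve the Plateau problem in $D$ for the boundary curve, which gives an energy-minimizing disk $u\colon\overline{\mathbb D}\to D$. In a disk of finite area, the uniformization results for metric disks imply that $u$ is a homeomorphism, or at least is monotone and area-non-increasing. Moreover, $u$ has a measurable Finsler (in fact quasi-Riemannian, with isotropic ellipses) approximate metric differential, and $\mathcal{H}^2(D)\ge\int_{\mathbb D}\mathbf J(\apmd{u})$.

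We then run Ivanov's proof, which is based on integral geometry, directly for this measurable Finsler structure on $\mathbb D$. The idea is to map $\mathbb D$ to $\S^2_+$ by distance functions to boundary points, $x\mapsto(\tilde d(x,\theta))_\theta$, or alternatively by Ivanov's functions $f_\theta$, which are $1$-Lipschitz. Then apply the coarea formula and a Crofton-type identity, which only require Lipschitz functions and measurable norms. The step I expect to be hardest is justifying Ivanov's inequality for merely measurable norms with only Sobolev regularity of $u$. Specifically, the pointwise inequality between Jacobians is what has to be verified a.e. I would handle it by approximation, checking the inequality at points of approximate differentiability, where Ivanov's linear-algebra lemma applies verbatim.

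Sharpness is immediate: the round $\mathbb{RP}^2$ with $\sys=\pi$ has area $2\pi$.
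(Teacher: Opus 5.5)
Your overall plan matches the paper's: pass to the double cover $(\S^2,\tilde d)$, cut along a $\sigma$-invariant circle of length $2\sys$ on which $\tilde d$ is the intrinsic circle metric, and apply a metric hemisphere-filling inequality to the complementary disks. However, both substantive steps are left open.

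First, you never construct the curve, and your fallback (``if no such curve exists \ldots\ Ivanov's averaging argument'') is unspecified. In fact the curve always exists, and the argument is short. The function $x\mapsto\tilde d(x,\sigma x)$ attains its minimum, and this minimum equals $\sys$: a path from $x$ to $\sigma x$ projects to a non-contractible loop, and a shortest non-contractible loop lifts to such a path. Concatenate such a lift with its $\sigma$-image. You get a closed curve $\theta$ of length $2\pi$ with $\sigma(\theta(s))=\theta(s+\pi)$. Every subarc of length $\pi$ then joins a point to its $\sigma$-image, so it is minimizing. Hence $\tilde d(\theta(s),\theta(t))=|t-s|$ for $|t-s|\le\pi$, i.e.\ $\theta$ is an isometrically embedded circle, in particular a Jordan curve. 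Also note that the closed complementary disks with the restricted metric are not length spaces, so your hemisphere statement should not assume that.

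Second, and more seriously, the metric hemisphere inequality is not proved. You propose to rerun Ivanov's integral-geometric argument for the measurable norm field of a Sobolev parametrization $u$ of $D$, and to settle the crux ``by approximation'', checking a linear-algebra lemma at points of approximate differentiability. A pointwise inequality cannot suffice on its own, because the conclusion depends on the boundary condition. Any proof must therefore combine the pointwise lemma with global steps (boundary integrals, coarea along level sets, lengths of curves bounded below by boundary distances). It is these global steps whose justification uses the Lipschitz or Finsler setting of Ivanov's theorem. Your $u\in\Lambda(\partial D,D)$ is only Sobolev: at best a uniform limit of homeomorphisms, not a homeomorphism. There is no general way to approximate it in area by Lipschitz maps into $D$ unless the target is Lipschitz $1$-connected, which $D$ need not be.

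The missing idea is to change the target rather than redo Ivanov's analysis. Extend the $1$-Lipschitz boundary map $\partial D\to\S^1\hookrightarrow\ell^\infty$ componentwise by McShane to a $1$-Lipschitz map $\Phi\colon D\to\ell^\infty$. Then $\Phi\circ u$ fills an isometrically embedded round circle, and $\mathcal H^2(D)\ge\mathrm{Area}(u)\ge\mathrm{Area}(\Phi\circ u)$. Since $\ell^\infty$ is Lipschitz $1$-connected, Sobolev and Lipschitz filling areas agree there, with constant-speed boundary parametrizations. So Ivanov's theorem for Lipschitz fillings applies as a black box and gives $2\pi$. Your distance-function map $x\mapsto(\tilde d(x,\theta))_\theta$ already lands in such a space; the point is to use it for this reduction rather than as an ingredient of a Sobolev version of Ivanov's proof.
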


    A key step in the proofs of \Cref{theorem-main-result,theorem-main-result-real-projective-plane} is characterizing suitable minimal metrics on the universal cover of the torus and the real projective plane, respectively. Such a classification is well understood in the Finsler setting \cite{burago-ivanov-minimality-planes,Ivanov-lipschitz-metrics}. We extend the characterization of minimal metrics by applying recent advances in the uniformization of metric surfaces \cite{ntalampekos-romney-2026-polyhedral-approximation-for-non-length,ntalampekos-romney-length,Meier-Wenger}, in combination with the theory of area-minimizing disks for metric targets due to Lytchak--Wenger \cite{lychak-wenger-20217-area-minimizing-discs-in-metric-spaces,lytchak-wenger-2018-intrinsic-structure-of-minimal-discs-in-metric-spaces}.
    
    We first formulate an extension of the minimality of two-dimensional planes by Burago--Ivanov \cite{burago-ivanov-minimality-planes,Burago-Ivanov-asymptotic-volume-finsler-tori}, which is an important ingredient in the proof of the systolic inequality for metric tori.
    
    \begin{proposition}\label{proposition-minimality-of-two-dimensional-planes}
        Let $\mathbb{W}$ be a Banach space and $\mathbb{V} \subset \mathbb{W}$ a two-dimensional subspace. Then any Jordan domain $\Omega \subset \mathbb{V}$ with rectifiable boundary is minimal in the following sense: For any two-dimensional metric disk $X$ and any locally $1$-Lipschitz map $v \colon X \to \mathbb{W}$, where $v|_{ \partial X } \colon \partial X \to \partial \Omega$ is a uniform limit of homeomorphisms, it holds that $\mathcal{H}^{2}_{X}( X ) \geq \mathcal{H}^{2}_{ \mathbb{W} }( \Omega )$.
    \end{proposition}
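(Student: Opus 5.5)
We reduce to the Finsler/polyhedral minimality of Burago--Ivanov \cite{burago-ivanov-minimality-planes} by combining uniformization and an approximation step. First, by the Hahn--Banach theorem there is a $1$-Lipschitz linear projection-type map? Not in general; instead we argue through area functionals. Observe that $\mathcal{H}^2_{\mathbb W}(\Omega)$ equals the Busemann--Hausdorff area of $\Omega$ with respect to the norm restricted to $\mathbb V$, and that for Lipschitz maps from Euclidean domains into $\mathbb W$ the Hausdorff area is computed via the area formula with the Busemann--Hausdorff Jacobian of the metric differential (Kirchheim). The statement of Burago--Ivanov says precisely that, for the Busemann--Hausdorff (equivalently, any suitable) area, flat discs in $\mathbb V$ minimize among Lipschitz surfaces in $\mathbb W$ with the same boundary. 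So the plan has three steps.

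\emph{Step 1 (reduction to finite area and a parametrization).} We may assume $\mathcal{H}^2_X(X)<\infty$. Since $v$ is locally $1$-Lipschitz, $\mathcal{H}^2_{\mathbb W}(v(K))\le \mathcal{H}^2_X(K)$ for compact $K$, and more importantly we want a Sobolev/Lipschitz parametrization. Using the uniformization theorem of Ntalampekos--Romney \cite{ntalampekos-romney-length,ntalampekos-romney-2026-polyhedral-approximation-for-non-length} (or Meier--Wenger \cite{Meier-Wenger}) for metric disks of finite area, there is a weakly quasiconformal map $u\colon \overline{\mathbb D}\to X$, in $W^{1,2}$, monotone onto $X$, with $\operatorname{Area}(u)\le \mathcal{H}^2_X(X)$ (area in the sense of the Busemann--Hausdorff/Hausdorff-compatible Jacobian, possibly up to the constant issues handled by these theorems). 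Then $w=v\circ u\colon \overline{\mathbb D}\to \mathbb W$ lies in $W^{1,2}$, satisfies $\operatorname{Area}(w)\le\operatorname{Area}(u)\le \mathcal{H}^2_X(X)$ since composing with a $1$-Lipschitz map does not increase the metric-differential Jacobian, and its trace $w|_{S^1}$ is a monotone reparametrization (uniform limit of homeomorphisms) of $\partial\Omega$, i.e.\ $w$ spans the Jordan curve $\partial\Omega$ in the Lytchak--Wenger sense.

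\emph{Step 2 (minimality for Sobolev discs).} We must show $\operatorname{Area}(w)\ge \mathcal{H}^2(\Omega)$ for every $w\in\Lambda(\partial\Omega,\mathbb W)$. Since $\mathbb W$ may be infinite-dimensional and non-proper, we first pass to a finite-dimensional or at least separable setting: the image of $w$ is separable, so we may replace $\mathbb W$ by a separable subspace, embed isometrically into $\ell^\infty$, and approximate $w$ by Lipschitz (even piecewise linear on a triangulation) maps $w_k$ with $\operatorname{Area}(w_k)\to\operatorname{Area}(w)$ and boundary close to $\partial\Omega$, using Lytchak--Wenger's approximation results \cite{lychak-wenger-20217-area-minimizing-discs-in-metric-spaces}. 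Closing the boundary gap with thin annuli of small area (rectifiability of $\partial\Omega$ controls this via a cone/collar estimate of area $\lesssim \ell(\partial\Omega)\cdot\varepsilon$), we obtain polyhedral discs in a finite-dimensional subspace containing $\mathbb V$ with boundary exactly $\partial\Omega$, to which Burago--Ivanov's theorem for normed spaces applies. Passing to the limit gives the inequality.

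\emph{Main obstacle.} The delicate point is Step 1: making sure the uniformization does not lose area, i.e.\ that $\operatorname{Area}(u)\le\mathcal{H}^2_X(X)$ with the same (Hausdorff) area notion used in Burago--Ivanov, and that the boundary of $X$ being only a uniform limit of homeomorphisms (not a homeomorphism) still yields a genuine spanning disc. I would handle the first via the area-preserving property of the weakly quasiconformal uniformization together with the fact that the Busemann--Hausdorff Jacobian is exactly the Hausdorff-measure Jacobian, and the second by noting that Lytchak--Wenger's class $\Lambda$ only requires the trace to be a weakly monotone parametrization.
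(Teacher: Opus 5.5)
Your overall architecture matches the paper's. You reduce to $\mathcal{H}^2(X)<\infty$, take the Ntalampekos--Romney map $u\in\Lambda(\partial X,X)$, and pass to $v\circ u\in\Lambda(\partial\Omega,\mathbb{W})$ with $\mathrm{Area}(v\circ u)\le\mathrm{Area}(u)\le\mathcal{H}^2(X)$. You then show $\mathrm{FillArea}_{\mathbb{W}}(\partial\Omega)\ge\mathcal{H}^2(\Omega)$ by reducing to Burago--Ivanov in finite dimensions. One correction to Step 1: the inequality $\mathrm{Area}(u)\le\mathcal{H}^2(X)$ involves no constants and no area-preservation, which a weakly quasiconformal map does not have. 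It follows from the area formula $\mathrm{Area}(u)=\int_{X\setminus u(N)}\#u^{-1}(x)\,d\mathcal{H}^2(x)$. Fibers of a uniform limit of homeomorphisms are continua, so finite area forces almost every fiber to be a single point.

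The genuine gap is the passage to finite dimensions in Step 2. You want piecewise linear maps $w_k$ into $\ell^\infty$ with $\mathrm{Area}(w_k)\to\mathrm{Area}(w)$, so that they lie in a finite-dimensional subspace containing $\mathbb{V}$. Affine interpolation does not converge in area in $\ell^\infty$: the metric differential of a Lipschitz map into $\ell^\infty$ is in general not the norm of a strong linear differential. Concretely, take the isometric embedding $x\mapsto(|x-x_j|-|x_j|)_j$ of a Euclidean disk into $\ell^\infty$, with $(x_j)$ dense in $\R^2$. On each small equilateral triangle $abc$, the coordinates with $x_j$ near $a$ force the interpolated seminorm to dominate $|\langle w,\cdot\rangle|$, where $\langle w,b-a\rangle=|b-a|$ and $\langle w,c-a\rangle=|c-a|$, so $|w|=2/\sqrt{3}>1$. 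Hence the piecewise linear areas exceed the Euclidean area by a fixed factor at every mesh size. You only get lower semicontinuity, which is the wrong direction. The Lytchak--Wenger--Young Lipschitz approximants have no reason to be finite-dimensional. Truncations $\ell^\infty\to\ell^\infty_n$ do not approximate the identity even on compact sets (consider $(1,1,1,\dots)$). So no approximation route is available.

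The missing idea is to project rather than approximate, which is what the paper does. Choose $\iota\colon\mathbb{W}\to\ell^\infty$ from Hahn--Banach norming functionals of a dense sequence in the unit sphere of $\mathbb{V}$, so $\iota$ is linear, $1$-Lipschitz, and isometric on $\mathbb{V}$. Then compose with the truncations $\pi_n$. Since $\pi_n\circ\iota$ is $1$-Lipschitz, $\mathrm{Area}(\pi_n\circ\iota\circ w)\le\mathrm{Area}(w)$. Its trace spans the Jordan curve $\Gamma_n=\pi_n\iota(\partial\Omega)$ in the plane $\pi_n\iota(\mathbb{V})\subset(\R^n,\|\cdot\|_\infty)$. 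There, Burago--Ivanov together with the equality of Sobolev and Lipschitz filling areas in Lipschitz $1$-connected spaces gives $\mathrm{Area}(w)\ge\mathcal{H}^2(\Omega_n)$. Finally, $\mathcal{H}^2(\Omega_n)\to\mathcal{H}^2(\Omega)$ because $\pi_n\circ\iota|_{\mathbb{V}}$ is $(1+\varepsilon)$-bi-Lipschitz for large $n$. This is a form of the Hahn--Banach idea you discarded at the start. You do not need a norm-one projection onto $\mathbb{V}$, only a $1$-Lipschitz linear map into a finite-dimensional space that is almost isometric on $\mathbb{V}$.
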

    In this manuscript, a metric space $X$ is called a \emph{metric disk} if $X$ is homeomorphic to a closed disk in the plane. 

    Our second minimality result concerns the minimality of Finsler hemispheres due to Ivanov \cite{ivanov-systolic-ineq-proj,Ivanov-lipschitz-metrics}. We prove an extension of his results for metric disks and use it to prove the systolic inequality for metric projective planes.
    
    \begin{proposition}\label{proposition-filling-minimality-of-circles}
        Let $X$ be a $2$-dimensional metric disk for which there exists a 1-Lipschitz homeomorphism $\partial X \to ( \mathbb{S}^1, d_{\mathrm{int}} )$. Then
        \begin{align*}
            \mathcal{H}^{2}(X) \geq 2\pi.
        \end{align*}
        Equality holds if $X$ is isometric to a hemisphere of unit radius.
    \end{proposition}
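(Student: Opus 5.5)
The plan is to reduce the statement to Ivanov's filling minimality of Finsler hemispheres \cite{ivanov-systolic-ineq-proj,Ivanov-lipschitz-metrics} by approximating $X$ with polyhedral and then Finsler disks, while keeping control of both the area and the boundary condition.

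\textbf{Equality case and normalization.} The equality statement is immediate. On the closed unit hemisphere the distance between two boundary points is realized by the shorter boundary arc. Hence the identity $\partial X \to (\S^1, d_{\mathrm{int}})$ is an isometry, in particular $1$-Lipschitz, and $\cH^2$ equals the Riemannian area $2\pi$. For the inequality we may assume $\cH^2(X) < \infty$. Let $f\colon \partial X \to \S^1$ be the given homeomorphism. The hypothesis says exactly that $d_X(p,q) \geq d_{\mathrm{int}}(f(p),f(q))$ for all $p,q \in \partial X$. We may also replace $d_X$ by the induced length metric. This only increases distances, so the boundary condition is preserved. I expect it does not change $\cH^2$ once we pass to a disk of finite area, and I would use the uniformization results of \cite{ntalampekos-romney-length,Meier-Wenger} to justify this. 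If this step causes trouble, I would instead work directly with a weakly quasiconformal parametrization.

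\textbf{Approximation step.} Apply the polyhedral approximation theorem of Ntalampekos--Romney \cite{ntalampekos-romney-2026-polyhedral-approximation-for-non-length}. This gives, for every $\varepsilon > 0$, a polyhedral length disk $X_\varepsilon$ together with a homeomorphism $h_\varepsilon\colon X_\varepsilon \to X$ with two properties. First, $\cH^2(X_\varepsilon) \leq \cH^2(X) + \varepsilon$. Second, $h_\varepsilon$ is $\varepsilon$-close to an isometry, in the sense that
\[ d_{X_\varepsilon}(x,y) \geq d_X(h_\varepsilon(x), h_\varepsilon(y)) - \varepsilon. \]
Next, smooth the polyhedral metric to a Riemannian metric, and hence a Finsler metric, at an arbitrarily small cost in area and distortion. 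Then rescale by a factor $\lambda_\varepsilon \to 1$ and use a collar argument near the boundary to absorb the additive error $\varepsilon$. This turns the additive error into an honest inequality: $d_{\lambda X_\varepsilon}(p,q) \geq d_{\mathrm{int}}(f h_\varepsilon(p), f h_\varepsilon(q))$ on the boundary. For the collar, I would attach a thin Finsler annulus of width comparable to $\varepsilon$ along $\partial X_\varepsilon$, whose area is $O(\varepsilon)$.

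\textbf{Applying Ivanov's theorem.} Ivanov's theorem states that a Finsler disk whose boundary distances dominate the intrinsic distances of the round circle of length $2\pi$ has Holmes--Thompson area at least $2\pi$. In dimension two the Busemann--Hausdorff area, which coincides with $\cH^2$, dominates the Holmes--Thompson area. Hence
\[ \cH^2(X) + O(\varepsilon) \geq \cH^2(\lambda X_\varepsilon) \geq 2\pi, \]
and letting $\varepsilon \to 0$ gives the claim.

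\textbf{Main obstacle.} The main obstacle is the approximation step: producing approximants whose boundary-to-boundary distances do not shrink while the area does not grow in the limit. Polyhedral approximation theorems naturally give errors in both directions. If the collar trick fails, my fallback is a different route. Solve the Plateau problem in an $L^\infty$ embedding of $X$ using Lytchak--Wenger \cite{lychak-wenger-20217-area-minimizing-discs-in-metric-spaces,lytchak-wenger-2018-intrinsic-structure-of-minimal-discs-in-metric-spaces}, with the boundary curve $\partial X$. This yields a Sobolev disk $u$ whose parametrized area is at most $\cH^2(X)$ and whose intrinsic metric space $Z_u$ is a geodesic disk that dominates boundary distances. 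Then run Ivanov's argument directly on $Z_u$, using approximate metric differentials of $u$ in place of a Finsler structure.
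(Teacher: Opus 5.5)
Your equality case is fine. The argument for the inequality, however, has a genuine gap, and it sits exactly at the step you flag as the main obstacle.

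\textbf{The approximation step does not close.} The polyhedral approximation theorems of Ntalampekos--Romney give approximating surfaces of \emph{comparable} area, i.e.\ $\cH^2(X_\varepsilon)\le K\,\cH^2(X)$ with a universal constant $K$, not $\cH^2(X)+\varepsilon$. Nothing you cite produces intrinsic (polyhedral or Finsler) approximants of a general metric disk with both $\limsup_{\varepsilon\to 0}\cH^2(X_\varepsilon)\le\cH^2(X)$ and non-shrinking boundary distances. Without that, a sharp constant like $2\pi$ cannot survive the limit, whether or not the collar trick works.

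\textbf{The normalization points the wrong way.} The induced length metric $\hat d$ satisfies $\hat d\ge d_X$, hence $\cH^2_{\hat d}\ge\cH^2_{d_X}$. A lower bound for $\cH^2_{\hat d}$ therefore says nothing about $\cH^2_{d_X}$ unless you prove the two agree, and the uniformization results you cite do not give this.

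\textbf{The fallback is incomplete.} It fills $\partial X$ itself, which need not be rectifiable: the given $1$-Lipschitz map goes \emph{from} $\partial X$ to $\S^1$. The bound $\mathrm{Area}(u)\le\cH^2(X)$ requires a competitor you do not construct. Finally, ``running Ivanov's argument on $Z_u$'' is an unproved non-smooth extension of Ivanov's theorem.

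\textbf{The missing idea.} No approximation of $X$ is needed, because the boundary hypothesis is a $1$-Lipschitz condition. Such maps extend into injective spaces, and $1$-Lipschitz maps do not increase area.
\begin{enumerate}
\item Take an isometric embedding $\iota$ of $(\S^1,d_{\mathrm{int}})$ into $\ell^\infty$. Let $\Phi\colon X\to\ell^\infty$ be a coordinatewise McShane extension of $\iota\circ f$; it is $1$-Lipschitz.
\item By \Cref{proposition-uniformization-theorem} there is $u\in\Lambda(\partial X,X)$ that is a uniform limit of homeomorphisms. The fibers of $u$ are points or nondegenerate continua, so by the area formula the multiplicity is $1$ almost everywhere, giving $\mathrm{Area}(u)\le\cH^2(X)$.
\item Then $\Phi\circ u\in\Lambda(\iota(\S^1),\ell^\infty)$, and so $\cH^2(X)\ge\mathrm{Area}(\Phi\circ u)\ge\mathrm{FillArea}_{\ell^\infty}(\iota(\S^1))$.
\end{enumerate}
This replaces your approximation problem by one fixed filling problem for the round circle in $\ell^\infty$, which is \Cref{proposition-filling-area-of-a-circle}. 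Since $\ell^\infty$ is Lipschitz $1$-connected, \Cref{lemma-sobolev-to-lipschitz-filling-area-lipschitz-connectivity} reduces it to Lipschitz fillings with constant-speed boundary. That is exactly the setting of Ivanov's results on Lipschitz metrics, which give the bound $2\pi$.
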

    Here, $d_{\mathrm{int}}$ refers to the intrinsic length distance on $\S^1$.

\subsection{Structure of the paper}

    The article is structured as follows. In Section \ref{sec: pre}, we fix our notation and present the basic definitions. In Section \ref{section-minimality-results}, we prove the minimality of two-dimensional planes, Proposition \ref{proposition-minimality-of-two-dimensional-planes}, and hemispheres, Proposition \ref{proposition-filling-minimality-of-circles}. In Section \ref{section-systolic-inequality-torus}, we first recall the necessary prerequisites on asymptotic geometry. Then, following the broad strategy of Burago--Ivanov \cite{Burago-Ivanov-asymptotic-volume-finsler-tori}, we prove the systolic inequality for the torus while addressing the challenges of the metric setting. In Section \ref{section-systolic-inequality-real-projective-plane}, we establish Pu's inequality for metric real projective planes.

\subsection{Acknowledgements}
    The authors thank Alexey Balitskiy, Alexander Lytchak, and Hugo Parlier for discussions regarding systolic inequalities, and Stefan Wenger for feedback on an earlier version of the manuscript.

\section{Preliminaries}\label{sec: pre}
    Let $(X,d)$ be a metric space. Given $x \in X$, we write 
    $$B(x,r)=\{y \in X \colon d(x,y)< r\}$$ for the open ball with center $x$ and radius $r>0$. We call a map $f\colon X \to Y$ between metric spaces \textit{$L$-Lipschitz} if 
    $$d(f(x),f(y)) \leq L d(x,y)$$
    for all $x,y \in X$. In case that $f$ is injective and its inverse also is $L$-Lipschitz, we say that $f$ is \textit{$L$-bi-Lipschitz}. For two maps $f, g\colon X \to Y$, we define the \textit{uniform distance} between \(f\) and \(g\) by
    $$ d_\infty(f, g)=\sup_{x\in X} d(f(x), g(x)).$$
    We say that $f_n \rightarrow f$ \emph{uniformly} if $d_{\infty}(f_n,f) \rightarrow 0$.
    
    A \emph{curve} is a continuous map $\gamma \colon I \rightarrow X$ where $I$ is an interval or the unit circle $\mathbb{S}^1$. In the case of an interval, the length of $\gamma$ is defined by
    \begin{align*}
        \ell( \gamma )
        =
        \sup\left\{
        \sum_{i=0}^{k-1} d( \gamma(t_i), \gamma(t_{i+1}) )
        \colon t_i \in I
        \,\text{and}\,
        t_0 < t_1 < \dots < t_k
        \right\}
    \end{align*}
    over finite partitions. We say $\gamma$ is \emph{rectifiable} if $\ell( \gamma ) < \infty$. A curve $\gamma \colon I \to X$ has constant speed if there exists a constant $s > 0$ such that, for every interval $[a,b] \subset I$, $\ell( \gamma|_{[a,b]} ) = s |b-a|$. Analogous definitions apply in the case $I = \mathbb{S}^1$.

    We call a metric space $X$ a \textit{length space} if, for all $x,y\in X$, the distance $d(x,y)$ is equal to the infimal length of curves in $X$ from $x$ to $y$, and we call $X$ \textit{geodesic} if this infimum is attained by such a curve. If $X$ is a compact or proper length space, it is also geodesic by the metric Hopf--Rinow theorem.
    
    The $n$-dimensional Hausdorff measure on $X$ is defined as follows: For a set $A \subset X$, we have
    \begin{align*}
        \mathcal{H}^{n}_X( A ) = \sup_{ \delta > 0 } \mathcal{H}^{n}_{X,\delta}(A),
    \end{align*}
    where
    \begin{align*}
        \mathcal{H}^{n}_{X,\delta}(A)
        =
        \frac{ \omega_n }{ 2^n }
        \inf\left\{
            \sum_{ i } \diam(E_i)^{n}
            \colon
            \text{ $A \subset \bigcup E_i$ and $\diam E_i < \delta$ for $i \in \N$ }
        \right\}.
    \end{align*}
    We often equip the same space with several metrics, in which case we replace $X$ in the subscript with the specific metric we use. When the choice of ambient space is understood, we omit the subscript $X$ from the notation. Here, $\omega_n$ is the Lebesgue measure of the $n$-dimensional Euclidean unit ball. The normalization guarantees that $\mathcal{H}^n$ on $\R^n$ coincides with the $n$-dimensional Lebesgue measure $\cL^n$.
    
    Let $\norm{\cdot}$ be a norm on $\R^2$. Since $\Z^2$ acts as isometries on $( \R^2, \norm{\cdot} )$, we obtain a \emph{flat Finsler torus} $(\R^2/\Z^2,\norm{\cdot})$ associated with $\norm{\cdot}$. The induced length distance between two orbits $\Z^2 \cdot x,\Z^2 \cdot y\in (\R^2/\Z^2,\norm{\cdot})$ is equal to 
    $$\min_{z\in \Z^2}\norm{x-y-z}.$$
    The Busemann--Hausdorff area of $(\R^2/\Z^2,\norm{\cdot})$ is defined by
    $$\textup{Area}_\mathrm{BH}(\R^2/\Z^2,\norm{\cdot}) = \int_{[0,1]^2} \frac{\pi}{\cL^2(K)} \,d\cL^2$$
    where $K \subset \R^2$ denotes the unit disk of $\norm{\cdot}$. It is an immediate consequence of the area formula that $\textup{Area}_\mathrm{BH}(\R^2/\Z^2,\norm{\cdot})$ is equal to the Hausdorff $2$-measure of $(\R^2/\Z^2,\norm{\cdot})$ equipped with the induced length distance. We refer to \cite{Balacheff-Gil-Moreno-de-Mora-finsler-tori} for more information on Finsler tori.

\section{Minimality of disks and hemispheres}\label{section-minimality-results}
    In this section, we prove Propositions \ref{proposition-minimality-of-two-dimensional-planes} and \ref{proposition-filling-minimality-of-circles}. The proofs apply metric-valued Sobolev theory. We refer to \cite{lychak-wenger-20217-area-minimizing-discs-in-metric-spaces} for the definitions and only recall the necessary facts. For each Sobolev map $u \in W^{1,2}( U, X )$, defined on a Lipschitz domain $U \subset \R^2$, there is a natural notion of parametrized (Busemann--Hausdorff) area $\mathrm{Area}(u)$, Reshetnyak energy $E(u)$, and boundary trace $\mathrm{trace}( u ) \colon \partial U \to X$.

    Below we use the fact that if $v \colon X \to Y$ is a locally $1$-Lipschitz map, then $v \circ u \in W^{1,2}( U, Y )$ and $\mathrm{Area}( v \circ u ) \leq \mathrm{Area}( u )$. We also note that if $\phi \colon V \to U$ is a conformal homeomorphism between domains, then $u \circ \phi \in W^{1,2}( V, X )$ with area and energy equal to that of $u$.  We emphasize that when $X$ is a Riemannian manifold and $u$ is a Lipschitz map, then $\mathrm{Area}(u)$ corresponds to the integral of the usual Jacobian; for Sobolev maps and more general targets, the parametrized area is also defined by integration of a generalized Jacobian.

    A Jordan curve $\Gamma \subset X$ always refers to a curve homeomorphic to $\mathbb{S}^1$. The class $\Lambda( \Gamma, X )$ consists of all $u \in W^{1,2}( \mathbb{D}, X )$ for which $\mathrm{trace}(u)$ has a continuous representative that is a uniform limit of homeomorphisms $\partial \mathbb{D} \to \Gamma$; we recall that $\mathbb{D} \subset \R^2$ is the Euclidean unit disk. The \emph{filling area} of $\Gamma$ is defined as
    \begin{align*}
        \mathrm{FillArea}_{X}( \Gamma )
        =
        \inf_{ u \in \Lambda( \Gamma, X ) } \mathrm{Area}(u).
    \end{align*}
    This is closely related to the \emph{Lipschitz filling area}:
    \begin{align*}
        \mathrm{FillArea}^{\mathrm{LIP}}_{X}( \Gamma )
        =
        \inf_{ \substack{ u \in \Lambda( \Gamma, X ) \\ \text{$u$ is Lipschitz} } } \mathrm{Area}(u).
    \end{align*}

    Obviously, \emph{a priori}, the Lipschitz filling area is at least the Sobolev one. However, when the target is a Banach space, or, more generally, Lipschitz $1$-connected up to some scale (see \cite[page 81, third paragraph]{lytchak-wenger-young-2020-dehn-functions-and-holder-extensions-in-asymptotic-cones} for the definition), these quantities coincide; see also \cite[Section 10]{lytchak-wenger-2018-intrinsic-structure-of-minimal-discs-in-metric-spaces}. We record this statement as follows.
    
    \begin{lemma}\label{lemma-sobolev-to-lipschitz-filling-area-lipschitz-connectivity}
        Let $X$ be a complete metric space that is Lipschitz $1$-connected up to some scale and let $\Gamma \subset X$ be a rectifiable Jordan curve. Then
        \begin{align*}
            \mathrm{FillArea}_{X}( \Gamma ) = \mathrm{FillArea}^{\mathrm{LIP}}_{X}( \Gamma ).
        \end{align*}
    \end{lemma}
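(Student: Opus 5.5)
The inequality $\mathrm{FillArea}_X(\Gamma) \leq \mathrm{FillArea}^{\mathrm{LIP}}_X(\Gamma)$ holds trivially, since Lipschitz maps in $\Lambda(\Gamma,X)$ are admissible in the Sobolev infimum. The work is the reverse inequality. I would fix $\varepsilon>0$ and $u \in \Lambda(\Gamma,X)$ with $\mathrm{Area}(u) \le \mathrm{FillArea}_X(\Gamma)+\varepsilon$, and aim to produce a Lipschitz $w \in \Lambda(\Gamma,X)$ with $\mathrm{Area}(w) \le \mathrm{Area}(u)+C\varepsilon$.

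\textbf{Step 1: normalize $u$.} I would not work with an arbitrary $u$. Instead I would use the existence theory of Lytchak--Wenger to replace $u$ by an energy-minimizing (among area minimizers) disk; a solution of Plateau's problem exists because $\Gamma$ is rectifiable and $X$ is complete. If no minimizer exists without properness, I would work directly with a near-minimizer. By precomposing with a conformal diffeomorphism and using the three-point condition, I may assume $u$ is $\sqrt2$-quasiconformal in the sense of Lytchak--Wenger. Then $u$ has a continuous representative extending continuously to $\overline{\mathbb D}$, with $u|_{\partial\mathbb D}$ a weakly monotone parametrization of $\Gamma$. This continuity is what makes discretization possible.

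\textbf{Step 2: triangulate and rebuild the map.} I would take a fine triangulation of $\overline{\mathbb D}$, or of a slightly smaller disk $\overline{B(0,r)}$ glued to a boundary collar, at mesh $\delta$. By uniform continuity, each triangle's image then has diameter below the Lipschitz $1$-connectivity scale. On the vertices I keep the values of $u$. On edges I choose Lipschitz curves of controlled length; by Fubini-type arguments on Sobolev maps, after a generic perturbation of the grid, $u$ restricted to the grid lines is absolutely continuous with length bounded by the energy. I then fill each triangle using Lipschitz $1$-connectivity (Lipschitz extension of loops). The key estimate is that the total area of these fillings is bounded by $C\sum_T \ell(u|_{\partial T})^2$. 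This sum should be compared with $\mathrm{Area}(u)$ up to an error tending to $0$ as $\delta\to0$.

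This comparison is the main obstacle. The crude bound $\sum_T \ell(u|_{\partial T})^2 \lesssim E(u)$ does not tend to $0$, so naive filling of every cell gives area comparable to the energy rather than to $\mathrm{Area}(u)$. To avoid this I would follow the approach of \cite[Section 10]{lytchak-wenger-2018-intrinsic-structure-of-minimal-discs-in-metric-spaces} and \cite{lytchak-wenger-young-2020-dehn-functions-and-holder-extensions-in-asymptotic-cones}:

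\begin{enumerate}
\item By approximate metric differentiability, on most cells $u$ is close to an affine map into a seminorm.
\item On such good cells I use the Lipschitz extension itself, so its area is close to the Jacobian integral there.
\item Only the bad cells, whose total measure is small, are filled crudely; their contribution is bounded by $C\int_{\text{bad}}|\nabla u|^2$, which is small by absolute continuity of the energy density.
\end{enumerate}

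\textbf{Step 3: the boundary collar.} Near $\partial\mathbb D$ I would connect the trace to an actual parametrization of $\Gamma$ that is a uniform limit of homeomorphisms. I would use a Lipschitz reparametrization of $\Gamma$ by arc length, which is possible since $\Gamma$ is rectifiable, and fill the thin annular strips between consecutive grid points by the same local Lipschitz fillings. Their area is $\lesssim \sum \ell_i^2 \le \max_i \ell_i \cdot \ell(\Gamma)\to 0$. The resulting map $w$ is Lipschitz, its trace parametrizes $\Gamma$ monotonically, and $\mathrm{Area}(w)\le \mathrm{Area}(u)+o(1)$. This yields the lemma.
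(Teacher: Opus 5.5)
\textbf{There is a genuine gap, and it is in Step 2, at the crux you identify yourself.} A filling of $u|_{\partial T}$ obtained from Lipschitz $1$-connectivity is some $C\ell/\delta$-Lipschitz map on $T$, where $\ell=\ell(u|_{\partial T})$ and $\delta$ is the mesh. All you know about it is that its area is $\lesssim C^2\ell^2$, and it depends only on the boundary loop, not on $u$ inside $T$. Approximate metric differentiability of $u$ on most of $T$ is an infinitesimal, measure-theoretic statement. It does not make that filling have area close to the Jacobian integral of $u$ over $T$: the constant $C$ is not $1$, and nothing forces an abstract filling of a nearly ``seminorm-affine'' loop in a nonlinear target to be nearly area-minimal. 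Claiming it is a quasiconvexity statement of the same depth as the lemma. So your items (1)--(2) are unsupported, and the cell-by-cell scheme as written only gives a Lipschitz filling of area $\lesssim C^2E(u)$. Item (3) is also misstated: crude fillings are controlled by the lengths of the edge curves, i.e.\ by energy on the $1$-skeleton, not by $\int_{\mathrm{bad}}|\nabla u|^2$.

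\textbf{The missing idea is to keep $u$ on the good region rather than refill it.} Extend $u$ across $\partial\mathbb{D}$, which needs a Lipschitz trace, and let $\rho_u$ be its minimal weak upper gradient. Then $u$ is $C\lambda$-Lipschitz on the Lusin-type set $K_\lambda=\{M\rho_u\le\lambda\}$, and $\lambda^2|\mathbb{D}\setminus K_\lambda|\to 0$ as $\lambda\to\infty$. Lipschitz $1$-connectivity extends $u|_{K_\lambda}$ to a map with Lipschitz constant $\lesssim\lambda$, via a Whitney decomposition of the complement. The scale restriction is harmless: Whitney squares have side at most the square root of the measure of the complement, so the loops to be filled have length $\lesssim\lambda|\mathbb{D}\setminus K_\lambda|^{1/2}\to 0$. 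The extension agrees with $u$ on $K_\lambda$, hence has the same Jacobian a.e.\ there, and the added area is $\lesssim\lambda^2|\mathbb{D}\setminus K_\lambda|\to 0$. This needs no triangulation, no continuity and no minimizer, only a Lipschitz trace. That is exactly the paper's two-step structure: it first replaces the trace by a constant-speed parametrization through a collar of arbitrarily small area \cite[Lemma 4.8]{lytchak-wenger-2018-intrinsic-structure-of-minimal-discs-in-metric-spaces}, and then imports the approximation from the proof of \cite[Proposition 3.1]{lytchak-wenger-young-2020-dehn-functions-and-holder-extensions-in-asymptotic-cones}.

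\textbf{Your Step 1 also fails in the generality of the lemma.} The Lytchak--Wenger existence theorem for Plateau solutions \cite{lychak-wenger-20217-area-minimizing-discs-in-metric-spaces} requires properness (or a substitute compactness assumption), which a general complete space lacks. Your fallback to a near-minimizer gives up precisely the continuity and quasiconformality that Steps 2 and 3 rely on, namely small images of triangles by uniform continuity and continuity up to $\partial\mathbb{D}$ in the collar.
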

    
    \begin{proof}
    Consider $u \in \Lambda( \Gamma, X )$. By attaching a Sobolev homotopy of arbitrarily small area to $u$, we obtain $v \in \Lambda( \Gamma, X )$ such that the boundary trace of $v$ is a constant-speed Lipschitz parametrization of $\Gamma$; cf. \cite[Lemma 4.8]{lytchak-wenger-2018-intrinsic-structure-of-minimal-discs-in-metric-spaces}. Now, in the proof of \cite[Proposition 3.1]{lytchak-wenger-young-2020-dehn-functions-and-holder-extensions-in-asymptotic-cones}, it is established that $v$ can be approximated arbitrarily well in area by Lipschitz maps that have the same boundary trace as $v$. Both steps use the Lipschitz $1$-connectivity in a critical way. By combining these facts, the claimed equality follows.       
    \end{proof}
    We also use the following generalization of the classical uniformization theorem due to \cite[Theorem 1.4]{ntalampekos-romney-2026-polyhedral-approximation-for-non-length}. Note that \cite[Theorem 1.4]{ntalampekos-romney-2026-polyhedral-approximation-for-non-length} is a stronger statement but we only recall the properties we need.
    
    \begin{proposition}\label{proposition-uniformization-theorem}
        Let $X$ be a metric space homeomorphic to the closed disk and having finite two-dimensional Hausdorff measure. Then there exists $u \in \Lambda( \partial X, X )$ that is a uniform limit of homeomorphisms $\overline{\mathbb{D}} \to X$.
    \end{proposition}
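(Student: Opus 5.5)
The plan is to deduce the statement directly from \cite[Theorem 1.4]{ntalampekos-romney-2026-polyhedral-approximation-for-non-length}, so most of the work is translating their conclusions into the Sobolev language of \cite{lychak-wenger-20217-area-minimizing-discs-in-metric-spaces}. Since $X$ is homeomorphic to $\overline{\mathbb{D}}$, it is compact, hence complete and separable. It also satisfies $\mathcal{H}^2(X)<\infty$, which is exactly the hypothesis of their theorem. I would then apply their result to obtain a continuous, surjective map $u\colon \overline{\mathbb{D}}\to X$ with two properties. First, $u$ is a uniform limit of homeomorphisms $u_n\colon\overline{\mathbb{D}}\to X$. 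Second, $u$ is weakly quasiconformal. Concretely, $u$ lies in the Newtonian--Sobolev class $N^{1,2}_{\mathrm{loc}}(\mathbb{D},X)$ and has a $2$-weak upper gradient $\rho_u$ satisfying a distortion inequality of the form $\rho_u^2\le K\,J_u$ a.e., for a universal constant $K$.

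The first step is to show that $u$ has finite energy and belongs to $W^{1,2}(\mathbb{D},X)$. Using the distortion inequality and the area formula (or simply monotonicity of $u$), I would bound
\begin{align*}
\int_{\mathbb{D}}\rho_u^2\,d\mathcal{L}^2\le K\int_{\mathbb{D}}J_u\,d\mathcal{L}^2\le K\,\mathcal{H}^2(X)<\infty .
\end{align*}
To pass from the Newtonian setting to the Reshetnyak/Korevaar--Schoen space used by Lytchak--Wenger, I would isometrically embed $X$ into $\ell^\infty$ via the Kuratowski embedding. Then I would invoke the standard equivalence of these Sobolev notions for maps into complete metric spaces: a map with a $2$-integrable upper gradient has post-compositions with all $1$-Lipschitz functions in $W^{1,2}$, dominated by a common $L^2$ function. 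This gives $u\in W^{1,2}(\mathbb{D},X)$.

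The second step concerns the trace. Since $u$ is continuous on all of $\overline{\mathbb{D}}$ and lies in $W^{1,2}$, its trace has the continuous representative $u|_{\partial\mathbb{D}}$. Each $u_n$ is a homeomorphism of closed disks, so it maps $\partial\mathbb{D}$ onto $\partial X$ by invariance of domain. Hence $u_n|_{\partial\mathbb{D}}\colon\partial\mathbb{D}\to\partial X$ are homeomorphisms converging uniformly to $u|_{\partial\mathbb{D}}$, and therefore $u\in\Lambda(\partial X,X)$.

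I expect the main obstacle to be the first step, namely reconciling the notion of Sobolev map and energy in \cite{ntalampekos-romney-2026-polyhedral-approximation-for-non-length} with that in \cite{lychak-wenger-20217-area-minimizing-discs-in-metric-spaces}. My first approach is the upper-gradient characterization above, with compactness of $X$ ensuring that all integrability issues are global rather than merely local. If the cited theorem only provides local Sobolev regularity, the global energy bound from $\mathcal{H}^2(X)<\infty$ should still upgrade it to $W^{1,2}(\mathbb{D},X)$.
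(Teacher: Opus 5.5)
Your proposal is correct and follows the paper, which gives no argument beyond citing Theorem 1.4 of Ntalampekos--Romney as a stronger statement. Your unpacking of that citation into the Lytchak--Wenger framework is exactly the translation the paper leaves implicit: a global $L^2$ weak upper gradient gives membership in the Reshetnyak space $W^{1,2}(\mathbb{D},X)$, continuity up to $\partial\mathbb{D}$ identifies the trace with $u|_{\partial\mathbb{D}}$, and invariance of domain makes $u_n|_{\partial\mathbb{D}}\colon\partial\mathbb{D}\to\partial X$ homeomorphisms.

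One step is stated too loosely: $\int_{\mathbb{D}}J_u\le\mathcal{H}^2(X)$ ``by monotonicity''. Fibers of $u$ may be nondegenerate continua, so the area formula alone does not give multiplicity at most one. You need either a short extra argument that a.e.\ point with $J_u>0$ has a degenerate fiber, or you should take the global energy bound directly from the cited theorem.
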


\subsection{Proof of \Cref{proposition-minimality-of-two-dimensional-planes}}\label{sec: minimality}

    We need the following extension of the results by Burago--Ivanov \cite[Theorem 1]{burago-ivanov-minimality-planes} (see also \cite{Burago-Ivanov-asymptotic-volume-finsler-tori}), proving that the Busemann--Hausdorff area induces a quasi-convex area density. This implies the following result, well known to experts.
    \begin{proposition}\label{proposition-minimality-of-two-dimensional-planes-affine}
    Let $\mathbb{W}$ be a Banach space and $\mathbb{V} \subset \mathbb{W}$ a two-dimensional subspace. Suppose that $\Gamma \subset \mathbb{V}$ is a rectifiable Jordan curve and $\Omega \subset \mathbb{V}$ the Jordan domain bounded by $\Gamma$. Then
    \begin{align*}
        \mathrm{FillArea}_{\mathbb{W}}( \Gamma ) = \mathcal{H}^{2}( \Omega ).
    \end{align*}
    \end{proposition}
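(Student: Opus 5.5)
We prove the two inequalities separately. For the upper bound $\mathrm{FillArea}_{\mathbb{W}}(\Gamma) \leq \mathcal{H}^2(\Omega)$, we use that $\overline{\Omega}$ is a Jordan domain in the normed plane $\mathbb{V}$ with rectifiable boundary. By the classical Riemann mapping theorem together with Carathéodory's theorem, there is a conformal homeomorphism $\phi \colon \overline{\mathbb{D}} \to \overline{\Omega}$ with respect to the Euclidean structure of $\mathbb{V} \cong \R^2$. Since $\Gamma$ is rectifiable, $\phi \in W^{1,2}(\mathbb{D},\R^2)$, and it is a homeomorphism on the boundary. Viewed as a map into $\mathbb{W}$, it belongs to $\Lambda(\Gamma,\mathbb{W})$. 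Its parametrized Busemann--Hausdorff area equals $\mathcal{H}^2_{\mathbb{V}}(\Omega)$ by the area formula, because $\phi$ is injective and satisfies Lusin's property (N).

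For the lower bound, by \Cref{lemma-sobolev-to-lipschitz-filling-area-lipschitz-connectivity} it suffices to treat Lipschitz $u \in \Lambda(\Gamma,\mathbb{W})$; a Banach space is Lipschitz $1$-connected at all scales. The plan is to compose with a projection. Assume first that $\mathbb{W}$ is finite-dimensional. Here the key input is Burago--Ivanov \cite[Theorem 1]{burago-ivanov-minimality-planes}: the Busemann--Hausdorff $2$-area density is semi-elliptic (quasi-convex) on $\mathbb{W}$. Concretely, for a Lipschitz disk $u$ with boundary on $\Gamma$, quasi-convexity says that affine/planar pieces minimize the integrated Jacobian among maps with the same boundary.

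To make this rigorous we use a calibration-type argument. Burago--Ivanov construct, for the $2$-plane $\mathbb{V}$, a constant $2$-form $\omega$ on $\mathbb{W}$ (equivalently, a bilinear alternating functional) with two properties: $|\omega(\xi)| \leq \mathrm{BH}(\xi)$ for all simple $2$-vectors $\xi$, and equality on $\mathbb{V}$. Then
\begin{align*}
\mathrm{Area}(u) \geq \int_{\mathbb{D}} u^*\omega = \int_{\partial\mathbb{D}} u^*\alpha = \int_\Gamma \alpha = \int_\Omega \omega = \mathcal{H}^2(\Omega),
\end{align*}
where $d\alpha = \omega$ and Stokes is applied to the Lipschitz map. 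The boundary degree is $\pm1$ since $\mathrm{trace}(u)$ is a uniform limit of homeomorphisms onto $\Gamma$; the sign is fixed by orientation.

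The main obstacle is the infinite-dimensional case, together with a slight mismatch: Burago--Ivanov actually prove quasi-convexity rather than existence of a calibrating form, and in general only quasi-convexity holds. To handle this, we approximate $u$ by piecewise affine maps, using Lipschitz maps and density of triangulations. For a piecewise affine disk, the image lies in a finite-dimensional subspace $\mathbb{W}' \supset \mathbb{V}$, and we apply the finite-dimensional Burago--Ivanov theorem there. Note that the Busemann--Hausdorff area of a planar piece depends only on the norm restricted to that $2$-plane, so it does not matter whether it is computed in $\mathbb{W}'$ or in $\mathbb{W}$. Finally, we control the error at the boundary by attaching thin collars of small area connecting the approximant's boundary to $\Gamma$, with area at most $C\,\ell(\Gamma)\varepsilon$ since the cone over a short curve is small. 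Letting $\varepsilon\to0$ gives $\mathrm{Area}(u)\ge\mathcal{H}^2(\Omega)$.
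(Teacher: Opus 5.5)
Your upper bound is the same as the paper's, and your argument is fine when $\mathbb{W}$ is finite-dimensional. The gap is the passage to a general Banach space $\mathbb{W}$. There you claim that a Lipschitz $u\in\Lambda(\Gamma,\mathbb{W})$ can be approximated uniformly by piecewise affine maps with area at most $\mathrm{Area}(u)+o(1)$, ``using Lipschitz maps and density of triangulations''.

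This is exactly where infinite dimensions cause trouble. A Banach space need not have the Radon--Nikod\'ym property, so $u$ can be nowhere differentiable. $\mathrm{Area}(u)$ is computed from the metric differential $\mathrm{md}\,u_x$. The area of the affine interpolant on a small triangle, however, depends on the norm of $\mathbb{W}$ on the whole span of the two edge increments, and metric differentiability controls only the three edge lengths. Vertex interpolation really does fail. Take $\mathbb{W}=\ell^\infty$ and let $u$ have as coordinates:
\begin{enumerate}
\item a dense family of unit linear functionals on Euclidean $\R^2$;
\item for every triangle $T$ of the dyadic triangulations (dyadic squares cut along an anti-diagonal), $1$-Lipschitz McShane extensions realizing on the vertices of $T$ the extreme increments allowed by its edge lengths.
\end{enumerate}
Then $u$ is an isometric embedding of the Euclidean unit square, so $\mathrm{Area}(u)=1$. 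On each $T$, however, the interpolant's unit ball is the inscribed hexagon $\mathrm{conv}\{\pm e_1,\pm e_2,\pm(e_1-e_2)/\sqrt2\}$, which has area $1+\sqrt2$. Hence every dyadic interpolant has area $\pi/(1+\sqrt2)\approx 1.30$, even though the interpolants converge uniformly to $u$. The error goes in the wrong direction for a lower bound, and you give no other construction.

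The idea you are missing is to avoid approximating $u$ at all. Instead, post-compose with a $1$-Lipschitz \emph{linear} finite-rank map that is almost isometric on $\mathbb{V}$. The paper does this as follows.
\begin{enumerate}
\item Choose norming functionals $w_j\in\mathbb{W}^*$ for a dense sequence in the unit sphere of $\mathbb{V}$, and set $\iota=(w_j)_j\colon\mathbb{W}\to\ell^\infty$. This map is $1$-Lipschitz and isometric on $\mathbb{V}$.
\item Truncate to get $P_n=\pi_n\circ\iota\colon\mathbb{W}\to(\R^n,\|\cdot\|_\infty)$.
\item Since area does not increase under $1$-Lipschitz maps, $\mathrm{FillArea}_{\mathbb{W}}(\Gamma)\ge\mathrm{FillArea}_{(\R^n,\|\cdot\|_\infty)}(P_n\Gamma)$. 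This equals $\mathcal{H}^2(P_n\Omega)$ by the finite-dimensional Burago--Ivanov theorem and \Cref{lemma-sobolev-to-lipschitz-filling-area-lipschitz-connectivity}.
\item Finally, $\mathcal{H}^2(P_n\Omega)\to\mathcal{H}^2(\Omega)$, because $P_n|_{\mathbb{V}}$ is $(1+\varepsilon)$-bi-Lipschitz for large $n$.
\end{enumerate}

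The same device also rescues your calibration route. Burago--Ivanov in fact obtain a convex extension of the two-dimensional Busemann--Hausdorff density to $\Lambda^2$ in finite dimensions. So the calibrating constant form you first describe does exist there, and your remark that ``only quasi-convexity holds'' is unfounded. Taking a calibration $\omega_n$ of $P_n(\mathbb{V})$, one gets $\mathrm{Area}(u)\ge\mathrm{Area}(P_n\circ u)\ge\int_{\mathbb{D}}(P_n\circ u)^*\omega_n=\mathcal{H}^2(P_n\Omega)$. This only involves the a.e.\ differentiable map $P_n\circ u$.
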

    \begin{proof}
       We start with the elementary observation:
        \begin{align*}
            \mathrm{FillArea}_{ \mathbb{W} }( \Gamma ) \leq \mathrm{FillArea}_{ \mathbb{V} }( \Gamma ) \leq \mathcal{H}^{2}( \Omega ).
        \end{align*}
        To see this, consider an invertible linear map $L \colon \R^2 \to \mathbb{V} \subset \mathbb{W}$ and a conformal homeomorphism $\phi \colon \mathbb{D} \to L^{-1}( \Omega )$ obtained from the Riemann mapping theorem. It follows that
        \begin{align*}
            \mathrm{FillArea}_{ \mathbb{W} }( \Gamma ) \leq \mathrm{Area}( L \circ \phi ) = \mathcal{H}^{2}(\Omega).
        \end{align*}
        We prove that the inequality above is an equality by reducing the proof to a finite-dimensional statement as follows.
        
        Consider a separable dense set $\{ v_1,\dots,v_{j},v_{j+1},\dots\}$ in the unit sphere of $\mathbb{V}$ and consider, for each $j \in \N$, $w_j \in \mathbb{W}^{*}$ with unit norm and $w_j(v_j) = 1$. Next, we consider the linear map $\iota \colon \mathbb{W} \to \ell^{\infty}$ defined by $v \mapsto ( w_1(v), \dots, w_{j}(v), w_{j+1}(v), \dots )$, where $\ell^{\infty}$ is the Banach space of bounded sequences equipped with the supremum norm. It is immediate that $\iota$ is $1$-Lipschitz and its restriction to $\mathbb{V}$ is an isometry. In particular,
        \begin{align*}
            \mathrm{FillArea}_{ \ell^{\infty} }( \iota(\Gamma) ) \leq \mathrm{FillArea}_{ \mathbb{W} }( \Gamma ).
        \end{align*}
        It suffices to prove $\mathrm{FillArea}_{ \ell^{\infty} }( \iota(\Gamma) ) \geq \mathcal{H}^{2}(\Omega)$ to finish. To this end, consider the linear projection $\pi_n \colon \ell^{\infty} \to ( \mathbb{R}^n, \| \cdot \|_{\infty} )$, where $\pi_n( x_1, \dots, x_{j}, x_{j+1}, \dots ) = ( x_1, \dots, x_{n-1}, x_{n} )$. That is, $\pi_n$ truncates the sequence at index $n$, and we identify the target with the $n$-dimensional Euclidean space equipped with the supremum norm. Clearly, there exists $n_0 \in \N$ such that for every $n \geq n_0$, the composition $\pi_n \circ \iota|_{ \mathbb{V} }$ has rank two. In particular, $\Gamma_n = \pi_n \circ \iota( \Gamma )$ is a rectifiable Jordan curve in the Jordan domain $\Omega_n = \pi_n \circ \iota(\Omega)$ inside the two-dimensional subspace $\mathbb{V}_n = \pi_n \circ \iota(\mathbb{V})$ of $( \mathbb{R}^n, \| \cdot \|_{\infty} )$. By construction, it holds that
        \begin{align*}
            \mathrm{FillArea}_{ ( \mathbb{R}^n, \| \cdot \|_{\infty} ) }( \Gamma_n ) \leq \mathrm{FillArea}_{ \ell^{\infty} }( \iota(\Gamma) ).
        \end{align*}
        By the minimality of two-dimensional planes in finite dimensions proved in \cite{burago-ivanov-minimality-planes} (see also \cite[Conjecture B]{Burago-Ivanov-asymptotic-volume-finsler-tori}) and by \Cref{lemma-sobolev-to-lipschitz-filling-area-lipschitz-connectivity}, it follows that
        \begin{align*}
            \mathcal{H}^{2}( \Omega_n ) = \mathrm{FillArea}^{\mathrm{LIP}}_{  ( \mathbb{R}^n, \| \cdot \|_{\infty} )  }( \Gamma_n ) = \mathrm{FillArea}_{ ( \mathbb{R}^n, \| \cdot \|_{\infty} ) }( \Gamma_n ).
        \end{align*}
        We observe that, for every $\varepsilon > 0$, there exists $n_\varepsilon \in \N$ so that $\pi_n \circ \iota|_{ \mathbb{V} }$ is $(1+\varepsilon)$-bi-Lipschitz for every $n \geq n_\varepsilon$. This implies that
        \begin{align*}
            \lim_{ n \to \infty } \mathcal{H}^{2}( \Omega_n ) = \mathcal{H}^{2}( \Omega ).
        \end{align*}
        Combining this fact with the chains of inequalities completes the proof.
    \end{proof}
    
    We now proceed with the proof of the main result of this subsection.
    \begin{proof}[Proof of \Cref{proposition-minimality-of-two-dimensional-planes}]
    In case $\mathcal{H}^{2}_X( X ) = \infty$, there is nothing to prove. In the finite case, we apply \Cref{proposition-uniformization-theorem} and consider $u \in \Lambda( \partial X, X )$ that is a uniform limit of homeomorphisms $\overline{\mathbb{D}} \to X$. Then $v \circ u \in \Lambda( \partial \Omega, \mathbb{W} )$ by assumptions on $v$. Thus, \Cref{proposition-minimality-of-two-dimensional-planes-affine} implies
    \begin{align*}
        \mathcal{H}^{2}( \Omega ) \leq \mathrm{Area}( v \circ u ) \leq \mathrm{Area}(u).
    \end{align*}
    Given that $u$ is Sobolev, the area formula for $u$ yields the existence of a negligible set $N \subset \mathbb{D}$ such that
    \begin{align*}
        \infty > \mathrm{Area}(u) = \int_{ X \setminus u(N) } \sharp( u^{-1}(x) ) \,d\mathcal{H}^{2}(x),
    \end{align*}
    where $\sharp A$ is equal to $\infty$ for infinite sets and the cardinality for finite sets. Since $u$ is a uniform limit of homeomorphisms, it follows that either $u^{-1}(x)$ is a point or a compact and connected set with positive diameter. In the latter case, clearly $\sharp( u^{-1}(x) ) = \infty$ so necessarily $\sharp( u^{-1}(x) ) = 1$ for $\mathcal{H}^2$--almost every $x \in X \setminus u(N)$. Hence
    \begin{align*}
         \mathrm{Area}(u)
         =
         \mathcal{H}^{2}( X \setminus u(N) )
         \leq
         \mathcal{H}^{2}(X).
    \end{align*}
    The claim follows by combining the chain of inequalities.
    \end{proof}

\subsection{Proof of \Cref{proposition-filling-minimality-of-circles}}
    The proof is similar to that of \Cref{proposition-minimality-of-two-dimensional-planes}. Recall that $d_{\mathrm{int}}$ denotes the intrinsic length distance.

    \begin{proposition}\label{proposition-filling-area-of-a-circle}
        Let $Z$ be a complete metric space for which there exists an isometric embedding $( \mathbb{S}^1, d_{\mathrm{int}} ) \xhookrightarrow{} Z$. Then
        \begin{align*}
            \mathrm{FillArea}_{Z}( \mathbb{S}^1 ) \geq 2\pi.
        \end{align*}
    \end{proposition}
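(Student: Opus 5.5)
Following the template of \Cref{proposition-minimality-of-two-dimensional-planes-affine}, I would reduce to a finite-dimensional statement due to Ivanov, namely the filling minimality of the hemisphere among Lipschitz fillings of the circle in a Banach space with $\ell^\infty$-type norm \cite{ivanov-systolic-ineq-proj,Ivanov-lipschitz-metrics}.

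\textbf{Step 1: embed $Z$ into $\ell^{\infty}$.} Fix a countable dense set $\{s_j\}$ in $\mathbb{S}^1$ and consider the Kuratowski-type map
\begin{align*}
    \kappa \colon Z \to \ell^{\infty}, \qquad \kappa(z) = \bigl( d_Z(z,s_j) - d_Z(z_0,s_j) \bigr)_{j} ,
\end{align*}
where $z_0 \in Z$ is a base point. This map is $1$-Lipschitz. Because $d_Z$ restricts to $d_{\mathrm{int}}$ on $\mathbb{S}^1$ and $\{s_j\}$ is dense, the restriction of $\kappa$ to $\mathbb{S}^1$ is an isometric embedding. Post-composing any $u \in \Lambda(\mathbb{S}^1, Z)$ with $\kappa$ gives an element of $\Lambda(\kappa(\mathbb{S}^1), \ell^\infty)$ of no larger area. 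Hence
\begin{align*}
    \mathrm{FillArea}_{\ell^\infty}(\kappa(\mathbb{S}^1)) \leq \mathrm{FillArea}_Z(\mathbb{S}^1),
\end{align*}
and it suffices to bound the left-hand side from below.

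\textbf{Step 2: truncate to finitely many coordinates.} Let $\pi_n \colon \ell^\infty \to (\R^n,\|\cdot\|_\infty)$ be the truncation. The curve $\Gamma_n = \pi_n\circ\kappa(\mathbb{S}^1)$ is the image of $\mathbb{S}^1$ under $x \mapsto (d_{\mathrm{int}}(x,s_j) - c_j)_{j\le n}$. For large $n$ this map is injective, so $\Gamma_n$ is a rectifiable Jordan curve. As $n$ grows, the map is $(1+\varepsilon_n)$-bi-Lipschitz onto its image up to an additive error $\delta_n \to 0$, where $\delta_n$ is controlled by the mesh of $\{s_1,\dots,s_n\}$.

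Since $(\R^n,\|\cdot\|_\infty)$ is Lipschitz $1$-connected, \Cref{lemma-sobolev-to-lipschitz-filling-area-lipschitz-connectivity} identifies the Sobolev and Lipschitz filling areas of $\Gamma_n$. Because $\pi_n$ is $1$-Lipschitz,
\begin{align*}
    \mathrm{FillArea}^{\mathrm{LIP}}_{(\R^n,\|\cdot\|_\infty)}(\Gamma_n) \leq \mathrm{FillArea}_{\ell^\infty}(\kappa(\mathbb{S}^1)).
\end{align*}

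\textbf{Step 3: apply Ivanov's finite-dimensional bound.} Ivanov's theorem states that if a closed curve in a finite-dimensional normed space is a $1$-Lipschitz image of $(\mathbb{S}^1,d_{\mathrm{int}})$ onto which distances are \emph{almost} preserved, then any Lipschitz filling has Busemann--Hausdorff area at least $2\pi - o(1)$. This is his proof of Pu's inequality for Finsler metrics, phrased via Lipschitz fillings with a quantitative error.

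\textbf{Main obstacle.} The difficulty is that $\Gamma_n$ is only an approximately isometric copy of the circle, with additive error $\delta_n$. An exact statement for isometric circles would not apply directly. I would try to handle this in one of two ways:
\begin{itemize}[label={}]
    \item (a) Use the robust form of Ivanov's argument. It only uses the distances $|x_j - x_{j+n}| = \pi$ between antipodal points. Approximately these become $\pi - \delta_n$, which yields area at least $2\pi(1-\delta_n/\pi)^2$.
    \item (b) Attach a thin annulus of area $O(\delta_n)$ in an $\ell^\infty$ space containing both $\Gamma_n$ and an exactly isometric circle. Such an exact isometric copy of the finite metric space $\{s_1,\dots,s_n\}$ exists in $\ell^\infty_n$ by the Fréchet embedding.
\end{itemize}
Letting $n \to \infty$ then gives $\mathrm{FillArea}_Z(\mathbb{S}^1) \geq 2\pi$.
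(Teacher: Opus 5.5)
Your Step 1 is correct and matches the paper's first step: the paper quotes \cite[Equation (10-1)]{lytchak-wenger-2018-intrinsic-structure-of-minimal-discs-in-metric-spaces} for the reduction to $\ell^\infty$, and your based Kuratowski map does the same by hand. The gap is the truncation in Step 2 and the theorem it forces you to use in Step 3. No space $(\R^n,\|\cdot\|_\infty)$ contains an isometric copy of $(\mathbb{S}^1,d_{\mathrm{int}})$. Suppose $(f_1,\dots,f_n)$ were one. Then for each $x$ some $1$-Lipschitz $f_j$ satisfies $|f_j(x)-f_j(-x)|=\pi$. This forces $f_j=c\pm d_{\mathrm{int}}(\cdot,x)$, so each coordinate serves only one antipodal pair, while there are uncountably many pairs. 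Hence $\Gamma_n$ is never an isometric circle.

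The ``finite-dimensional Ivanov theorem with quantitative error'' you invoke is not what \cite{ivanov-systolic-ineq-proj,Ivanov-lipschitz-metrics} provide. The results the paper uses require the boundary distances to be at least those of $(\mathbb{S}^1,d_{\mathrm{int}})$; for a $1$-Lipschitz boundary parametrization this means exactly isometric. Your argument therefore rests on a stability statement you never prove. This is also where the analogy with \Cref{proposition-minimality-of-two-dimensional-planes-affine} breaks: a truncated plane is still a plane, but a truncated isometric circle is no longer isometric.

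Neither fix supplies the missing stability.

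\emph{Fix (a).} Antipodal distances $\ge\pi$ control the area only together with the normalization that the boundary has length $2\pi$; the two together force the boundary to be isometric. After rescaling $\Gamma_n$ by $\pi/(\pi-\delta_n)$, the length may be as large as $2\pi^2/(\pi-\delta_n)>2\pi$. Without a length constraint the conclusion is false. Take a thin planar tripod with three arms of length slightly above $\pi$, and parametrize its boundary so that antipodal parameters always lie on different arms (traverse the six half-arms in order; antipodal points then sit at distances $t$ and $\pi-t$ from the centre). All antipodal distances are $\ge\pi$, the boundary length is about $6\pi$, and the area is arbitrarily small. So $2\pi(1-\delta_n/\pi)^2$ does not follow by scaling. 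You would need continuity of the sharp bound in the length excess, which is exactly the unproved point.

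\emph{Fix (b).} The Fr\'echet embedding gives an isometric copy of the finite set $\{s_1,\dots,s_n\}$, not of the circle, so Ivanov's hypothesis is still unmet. An exactly isometric circle needs infinitely many coordinates, so the collar can only be glued in an infinite-dimensional $\ell^\infty$. Doing so leaves you needing $\mathrm{FillArea}_{\ell^\infty}(\kappa(\mathbb{S}^1))\ge 2\pi$, the statement you set out to prove, so the truncation bought nothing.

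\emph{What works.} Do not truncate. The paper stays in $\ell^\infty$, where the circle is exactly isometric. It passes to Lipschitz fillings by \Cref{lemma-sobolev-to-lipschitz-filling-area-lipschitz-connectivity}. It then restricts to fillings whose boundary trace is a constant-speed parametrization, via \cite[Lemma 2.6]{lytchak-wenger-2016-regularity-of-harmonic-discs-in-spaces-with-quadratic-isoperimetric-inequalities}. Your Step 3 also skips this point: traces in $\Lambda$ are only uniform limits of homeomorphisms. Finally it applies \cite[Theorem 5.2]{Ivanov-lipschitz-metrics} together with \cite[Theorem 2]{ivanov-systolic-ineq-proj}, which the paper applies directly to Lipschitz fillings in $\ell^\infty$ with constant-speed boundary trace.
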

    
    \begin{proof}
        By \cite[Equation (10-1)]{lytchak-wenger-2018-intrinsic-structure-of-minimal-discs-in-metric-spaces}, the lowest value of $\mathrm{FillArea}_Z( \mathbb{S}^1 )$ is obtained when we isometrically embed $\mathbb{S}^1$ into $\ell^{\infty}$. (Notice that the same value is reached in any injective metric space containing $\mathbb{S}^1$). Next, it follows from \Cref{lemma-sobolev-to-lipschitz-filling-area-lipschitz-connectivity} that
        \begin{align*}
            \mathrm{FillArea}_{\ell^\infty}^{\mathrm{LIP}}( \mathbb{S}^1 ) 
            =
            \mathrm{FillArea}_{\ell^\infty}( \mathbb{S}^1 ).
        \end{align*}
        By \cite[Lemma 2.6]{lytchak-wenger-2016-regularity-of-harmonic-discs-in-spaces-with-quadratic-isoperimetric-inequalities}, the infimum $\mathrm{FillArea}_{\ell^\infty}^{\mathrm{LIP}}( \mathbb{S}^1 )$ is reached by considering the corresponding infimum over Lipschitz maps whose boundary trace is a constant-speed parametrization of $\mathbb{S}^1$. Combining this fact with \cite[Theorem 5.2]{Ivanov-lipschitz-metrics} and \cite[Theorem 2]{ivanov-systolic-ineq-proj} implies $\mathrm{FillArea}_{\ell^\infty}^{\mathrm{LIP}}( \mathbb{S}^1 ) \geq 2\pi$. The conclusion follows. 
    \end{proof}

    \begin{proof}[Proof of \Cref{proposition-filling-minimality-of-circles}]
    If $\mathcal{H}^{2}_{X}(X) = \infty$, then the inequality is trivial. Otherwise, we apply \Cref{proposition-uniformization-theorem} and consider $u \in \Lambda( \partial X, X )$ that is a uniform limit of homeomorphisms $\overline{\mathbb{D}} \to X$. Now, consider an isometric embedding $\iota \colon \mathbb{S}^1 \to \ell^{\infty}$, $x \mapsto ( d(x,x_j) )_{ j \in \N }$, where $(x_j)_j$ is a countable dense sequence in $\mathbb{S}^1$. By assumption, there exists a 1-Lipschitz homeomorphism $\phi \colon \partial X \to \mathbb{S}^1$. We find a $1$-Lipschitz extension $\Phi \colon X \to \ell^{\infty}$ of $\iota \circ \phi$, by applying McShane extensions componentwise. Then $\Phi \circ u \in \Lambda( \Gamma, \ell^{\infty} )$, where $\Gamma = \iota( \mathbb{S}^1 )$. Therefore,
    \begin{align*}
        \mathcal{H}^{2}_{X}(X) \geq \mathrm{Area}( \Phi \circ u ) \geq \mathrm{FillArea}_{ \ell^{\infty} }( \Gamma )
    \end{align*}
    by a similar argument as in the proof of \Cref{proposition-minimality-of-two-dimensional-planes}. The proof is complete after applying \Cref{proposition-filling-area-of-a-circle}.
    \end{proof}

\section{Systolic inequality for the torus}\label{section-systolic-inequality-torus}
      The goal of this section is to prove \Cref{theorem-main-result}. The main strategy of the proof consists of reducing the problem to the case of a flat Finsler torus. More precisely, given a metric torus $(X,d)$, we define a norm $\norm{\cdot}_\textup{st}$ on $\R^2$ through a suitable averaging process. This norm, called the stable norm, induces a flat torus $(\R^2/\Z^2, \norm{\cdot}_\textup{st})$ such that 
    \begin{equation}\label{eq-intro-systoly-equal}
        \sys(X,d) = \sys(\R^2/\Z^2, \norm{\cdot}_\textup{st}) 
    \end{equation}
    and
    \begin{equation}\label{eq-intro-area-ineq}
        \cH^2_d(X) \geq \textup{Area}_{\mathrm{BH}}(\R^2/\Z^2, \norm{\cdot}_\textup{st}).
    \end{equation}    
    Here, \eqref{eq-intro-systoly-equal} is quite direct from the construction. However, \eqref{eq-intro-area-ineq} is more involved and relies on minimality of two-dimensional planes, \Cref{proposition-minimality-of-two-dimensional-planes}. In the metric setting, the stable norm was introduced in \cite{Burago-peridoic-metric-soviet,burago-periodic-metric} and plays a critical role in the study of Finsler tori; see \cite{Burago-Ivanov-asymptotic-volume-finsler-tori, burago-ivanoc-kleiner,Balacheff-Gil-Moreno-de-Mora-finsler-tori}. We note that the stable norm also appears in the study of general Riemannian manifolds \cite{federer-stable-norm, gromov-book}.

\subsection{The stable norm}\label{sec: stable norm}

Here, we introduce the stable norm and some of its properties. We omit the proofs of statements that can already be found in \cite[Chapter 8.5]{Burago-Burago-Ivanov-2001-a-course-in-metric-geometry}.

Let $(X,d_X)$ be a length space that is homeomorphic to a two-dimensional torus. Then $X$ admits a universal cover $\pi \colon \mathbb{R}^2 \to X$ whose group of deck transformations is $\Z^2$ acting by translations. The distance $d_X$ on $X$ admits a lift
    \begin{equation}\label{eq:def-universal-cover-dist-torus}
        d( x, y ) = \inf\{ \ell_X( \pi \circ \gamma ) \colon \gamma \colon [0,1] \to \mathbb{R}^2 \text{ joins $x$ to $y$} \},
    \end{equation}
    where $\ell_{X}( \pi \circ \gamma )$ is the length of the curve $\pi \circ \gamma \colon [0,1] \to X$ with respect to the distance $d_X$. We emphasize that the natural action of $\mathbb{Z}^2$ on $\R^2$ through translations leaves $d$ invariant, which is to say that $d$ is \emph{$\mathbb{Z}^2$--periodic}. It follows that $(X,d_X)$ is isometric to $\R^2/\Z^2$ equipped with the quotient distance coming from $d$. Thus, from now on, we make no distinction between $(X,d_X)$ and $(\R^2/\Z^2,d)$.

    Fix a basepoint $x_0 \in \R^2$. For $z \in \Z^2$, the \textit{stable norm} of $z$ is defined as 
    \begin{equation}\label{eq-def-stable-norm}
        \norm{z}_\textup{st} = \lim_{k\to \infty} \frac{d(x_0,x_0+kz)}{k}.
    \end{equation}
    The existence of the limit follows directly from Fekete's subadditive lemma and the triangle inequality. The domain of $\norm{\cdot}_\textup{st}$ can be uniquely extended to $\Q^2$ by a positively homogeneous extension, and then to $\R^2$ by continuity.

    The stable norm is independent of the choice of basepoint, as an immediate consequence of the well--known bounded distance theorem:
    \begin{theorem}\label{theorem-bounded-distance-theorem}{\normalfont (Bounded distance theorem)}
        There exists $C>0$, depending only on $X$, such that for all $x \in ( \R^2, d )$ and every $z \in \Z^2$,
        $$\norm{z}_{\textup{st}} \leq d(x,x+z) \leq \norm{z}_\textup{st} + C.$$
    \end{theorem}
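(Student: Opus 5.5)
The plan is to prove the two inequalities separately, the lower bound by subadditivity and the upper bound by a short-loop construction. First I record the basic facts. Since $d$ is $\Z^2$-periodic, the function $a_x(z) = d(x,x+z)$ is subadditive in $z$:
\[
d(x,x+z+w) \leq d(x,x+z) + d(x+z,x+z+w) = a_x(z) + a_x(w).
\]
Hence $a_x(kz)/k$ converges by Fekete's lemma to $\inf_k a_x(kz)/k$. Two constants control the dependence on the point. Let $D = \diam(\R^2/\Z^2,d)$, which is finite since $X$ is compact. Then for $x,y\in\R^2$ there is $z'\in\Z^2$ with $d(x,y+z') \leq D$, and by the triangle inequality together with $\Z^2$-invariance,
\[
|d(x,x+z) - d(y,y+z)| \leq 2D.
\]
This gives independence of the basepoint, and it also gives the lower bound at every point: $\norm{z}_{\textup{st}} = \inf_k d(x,x+kz)/k \leq d(x,x+z)$, taking $k=1$. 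So the left inequality is immediate, and the content lies in the upper bound with a uniform additive constant.

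For the upper bound, the crude estimate above only yields $d(x,x+z) \leq \norm{z}_{\textup{st}} + o(|z|)$, so genuinely two-dimensional topology is needed. I would follow the Burago argument as in \cite[Chapter 8.5]{Burago-Burago-Ivanov-2001-a-course-in-metric-geometry}. The first step reduces to a primitive direction. Write $z = m z_0$ with $z_0$ primitive. It suffices to find $C$ with $d(x,x+z_0) \leq \norm{z_0}_{\textup{st}} + C$ for all primitive $z_0$. The non-primitive case then needs an extra argument, because a geodesic representing the class $m z_0$ can be chosen to be a shortest closed curve in the class. So I would actually prove the statement directly for all $z$, using shortest closed curves.

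The key step is as follows. Fix $z$ and let $\gamma$ be a shortest closed curve in $X$ among all curves freely homotopic to the loop class $z$. Such a curve exists by Arzelà--Ascoli, since $X$ is compact and the length space is geodesic. Its lift $\tilde\gamma$ runs from $y$ to $y+z$, and translates $\tilde\gamma + kz$ concatenate into a $z$-invariant curve $c$. Then
\[
d(y,y+kz) \leq k\,\ell(\gamma),
\]
and the crucial claim is the reverse: $\ell(\gamma) \leq \norm{z}_{\textup{st}} + C$. To obtain it, I would compare a path $\sigma$ from $y$ to $y+kz$ of length close to $d(y,y+kz)$ with the curve $c$. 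Using planarity of $\R^2$, any path from $y$ to $y+kz$ in a strip-like region must cross the translates $c + w$ that separate the plane. I would pick a transversal translate $w$ with $\det(z,w)=\pm 1$ relative to $z/m$. A cut-and-paste argument on intersections of $\sigma$ with the translates $c+jw$ then shows that $\sigma$ can be rearranged, up to an additive error of $2D$, into $k$ closed curves in $X$ each in the class $z$. Each of these has length at least $\ell(\gamma)$, so
\[
d(y,y+kz) \geq k\,\ell(\gamma) - 2D.
\]
Dividing by $k$ gives $\ell(\gamma) \leq \norm{z}_{\textup{st}}$. Combined with the basepoint comparison, this yields
\[
d(x,x+z) \leq d(y,y+z) + 2D \leq \ell(\gamma) + 2D \leq \norm{z}_{\textup{st}} + 2D,
\]
so $C = 2D$ works.

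The main obstacle is the cut-and-paste step: proving that a near-minimal path from $y$ to $y+kz$ decomposes into $k$ loops in the class $z$ (or in classes summing to $kz$, each at least as long as the minimal loop in its class). This requires care when $z$ is not primitive, and with the merely topological (non-smooth) curves available in a length space. I would handle it by intersecting with the simple periodic curve $c$, after checking that a shortest curve in a primitive class can be taken simple. The non-primitive case I would reduce to the primitive one via $\ell(\gamma_{mz_0}) = m\,\ell(\gamma_{z_0})$, which follows from the fact that a shortest loop in a class $mz_0$ on a torus is an $m$-fold cover of a shortest simple loop.
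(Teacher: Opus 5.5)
Your outer structure is the paper's. That covers the lower bound, which you correctly get from Fekete's lemma at $k=1$ with no topology, and the comparison $|d(x,x+z)-d(y,y+z)|\le 2D$. It also covers the reduction of the upper bound to $\ell(\gamma_z)\le\norm{z}_{\st}$ for a shortest loop $\gamma_z$ in the class $z$. In the paper this inequality is exactly what \Cref{lemma-homotopy-class-minimizer} provides. Since $\gamma_z^k$ is shortest in the class $kz$, one has $\min_x d(x,x+kz)=k\,\ell(\gamma_z)$. Hence $\norm{z}_{\st}=\ell(\gamma_z)=\min_x d(x,x+z)$, which is \eqref{eq:characterization-stable-norm}, and the theorem follows with $C=2D$.

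The gap is your own proof of this step. The translates $c+jw$ are pairwise disjoint $z$-invariant curves, so the strips between them run parallel to $z$. A path from $y$ to $y+kz$ (for instance $c$ itself) need not meet any $c+jw$ with $j\neq 0$. Its intersections with this family therefore force no splitting of $\sigma$ into $k$ loops of class $z$. Crossing curves transverse to $z$ does not help either: it yields about $k$ arcs, but closing each arc into a loop costs a bounded amount and need not produce the class $z$. That is an error of order $k$, which does not vanish after dividing by $k$. Moreover, no additive error is involved: the projection of $\sigma$ is already a loop of class $kz$. What must be shown is precisely $\min_x d(x,x+kz)\ge k\min_x d(x,x+z)$, i.e.\ \Cref{lemma-homotopy-class-minimizer} itself. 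This needs a cut-and-paste of a loop of class $kz$ against its own $z$-translates, not against the $w$-translates of $c$. Equivalently, one works at the self-intersections of its projection to the cylinder $\R^2/\langle z\rangle$, around which it winds $k$ times. The paper does not reprove this; it imports it from \cite{Burago-Burago-Ivanov-2001-a-course-in-metric-geometry}. You should either cite it likewise or supply such an argument.

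Your treatment of non-primitive classes rests on a false claim: a shortest loop in the class $mz_0$ need not be an $m$-fold cover of a simple loop. In the flat torus $(\R^2/\Z^2,\norm{\cdot}_\infty)$, project the broken segment $(0,0)\to(1,\tfrac{3}{10})\to(2,0)$. It has length $2=\norm{(2,0)}_\infty$, so it is shortest in the class $(2,0)$. However, its arclength midpoint does not project to its starting point, so it is not a double cover of anything. The identity $\ell(\gamma_{mz_0})=m\,\ell(\gamma_{z_0})$ is true (it is \Cref{lemma-homotopy-class-minimizer}), but you do not need it. Once $\ell(\gamma_{z_0})\le\norm{z_0}_{\st}$ holds for primitive $z_0$, homogeneity of the stable norm gives $\ell(\gamma_{mz_0})\le m\,\ell(\gamma_{z_0})\le m\norm{z_0}_{\st}=\norm{mz_0}_{\st}$. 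So the only missing ingredient is the primitive case of \Cref{lemma-homotopy-class-minimizer}; with it cited, your argument becomes the paper's.
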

    The result can be proven using the following important lemma.
    
    \begin{lemma}\label{lemma-homotopy-class-minimizer}
        Let $X$ be a length space homeomorphic to a closed, oriented surface. Let $\gamma \colon \mathbb{S}^1 \to X$ be a length minimizer in its free homotopy class. Then $\gamma^{k} \colon \mathbb{S}^1 \to X$, where $z \mapsto \gamma( z^{k} )$, is a length minimizer in its free homotopy class. 
    \end{lemma}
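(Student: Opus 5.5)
The plan is to reduce the statement to a statement about closed curves in an annulus, where degrees of curves can be compared with lengths. Throughout, $X$ is compact, being homeomorphic to a closed surface. By Arzelà--Ascoli, every free homotopy class of closed curves that contains rectifiable curves also contains a length minimizer. We may assume $\gamma$ is not null-homotopic, since otherwise $\gamma$ and $\gamma^k$ are constant and there is nothing to prove.

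\textbf{Step 1: pass to a cyclic cover.} Write the class of $\gamma$ in $\pi_1(X)$ as $g = g_0^{m}$ with $g_0$ primitive and $m\ge 1$. Fundamental groups of closed oriented surfaces are torsion-free, and centralizers of nontrivial elements are infinite cyclic, so this is possible. Let $p\colon A \to X$ be the covering corresponding to $\langle g_0\rangle$, and equip $A$ with the lifted length metric as in \eqref{eq:def-universal-cover-dist-torus}. Since $X$ is oriented, $A$ is an orientable surface with infinite cyclic fundamental group, hence an open annulus. Free homotopy classes of closed curves in $A$ are classified by an integer degree $n$. A closed curve in $X$ freely homotopic to $\gamma^j$ lifts to a closed curve of degree $jm$ in $A$. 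Conversely, projections of degree-$n$ curves in $A$ have the same length and lie in the class of $g_0^n$. Hence it suffices to show the following, where $L_1$ denotes the infimal length of degree-$1$ curves in $A$:
\begin{align*}
\ell(\delta)\ge |n|\,L_1 \quad\text{for every closed curve $\delta$ of degree $n$ in $A$.}
\end{align*}
Given this, $(\gamma_0)^{m}$ with $\gamma_0$ a degree-$1$ minimizer shows $\ell(\gamma)=mL_1$. For $\gamma^k$ we then get $\ell(\gamma^k)=k\,m L_1$, which is at most the length of any competitor in the class of $g^k$.

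\textbf{Step 2: the annulus inequality.} Fix $n\ge 2$; negative degrees follow by reversing orientation. Minimizers of degree $n$ exist: the deck action of $A\to X$ is cocompact on the relevant region, so a minimizing sequence can be translated into a compact set. Let $\delta$ be such a minimizer, and argue by induction on $n$. Topologically, a simple closed curve in an annulus has degree $0$ or $\pm1$; this follows from the Jordan curve theorem and Schoenflies in the plane, using orientability. Hence $\delta$ has a self-intersection $\delta(s)=\delta(t)$. Splitting there yields two closed curves $\delta_1,\delta_2$ with degrees $a+b=n$ and $\ell(\delta_1)+\ell(\delta_2)=\ell(\delta)$.

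\textbf{Step 3: the main obstacle.} If $a,b\ge 1$, induction gives $\ell(\delta)\ge (a+b)L_1$. The main obstacle is excluding splittings with $a\le 0$, and degree-$0$ pieces in particular. A degree-$0$ piece can be deleted to produce a strictly shorter curve of degree $n$ unless it has length zero, which is harmless. A piece with $a<0$ is ruled out by working in the universal cover, the strip $\R^2$ with deck translation $T$. Lift $\delta$ to a path $\tilde\delta$ from $x$ to $T^n x$ and consider the translates $T^j\tilde\delta$. If $\tilde\delta$ and $T\tilde\delta$ are disjoint, the curve is essentially embedded in a way forcing degree $\le 1$, a contradiction. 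So they meet, and I would choose the splitting point as an intersection of $\tilde\delta$ with $T\tilde\delta$. This produces pieces of degrees $1$ and $n-1$ directly, and finishes the induction.

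Making this choice rigorous, that is, proving that $\tilde\delta\cap T\tilde\delta\neq\emptyset$ when $n\ge2$, is the planar-topology step I expect to require the most care. It can alternatively be cited from the torus case in \cite[Chapter 8.5]{Burago-Burago-Ivanov-2001-a-course-in-metric-geometry}.
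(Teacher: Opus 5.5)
\textbf{Verdict: genuine gap in Step 3.} Your cyclic-cover reduction and the reformulation $\ell(\delta)\ge |n|L_1$ are sound, but the step meant to exclude negative-degree pieces does not work as stated. The paper itself gives no proof of this lemma and defers to \cite[Chapter 8.5]{Burago-Burago-Ivanov-2001-a-course-in-metric-geometry}, so your argument has to stand on its own.

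\textbf{Where the gap is.} An intersection point $\tilde\delta(s)=T\tilde\delta(t)$ does \emph{not} by itself give pieces of degrees $1$ and $n-1$; the outcome depends on the order of $s$ and $t$.
\begin{itemize}
\item If $t<s$, the subloop $\delta|_{[t,s]}$ lifts to a path from $\tilde\delta(t)$ to $\tilde\delta(s)=T\tilde\delta(t)$. Its degree is $1$ and the complementary loop has degree $n-1$, as you want.
\item If $s<t$, the subloop $\delta|_{[s,t]}$ lifts to a path from $\tilde\delta(s)$ to $\tilde\delta(t)=T^{-1}\tilde\delta(s)$. The splitting is then $(-1,n+1)$, which is exactly the case Step 3 was supposed to rule out.
\end{itemize}
Such wrong-order intersections really occur. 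For a curve that first runs once backwards around the annulus and then three times forwards, the base point is one. So choosing \emph{an} intersection is not enough: you would have to prove that an intersection with $t<s$ exists. That is a finer planar statement than $\tilde\delta\cap T\tilde\delta\neq\emptyset$, and even the latter is only sketched in your proposal.

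\textbf{A way to close it using minimality instead of planar topology.}
\begin{itemize}
\item Parametrize a degree-$j$ minimizer $\delta$, $j\ge 2$, with constant speed.
\item Loops of small length are null-homotopic, since $X$ is compact and locally contractible and such homotopies lift to $A$. Deleting them would shorten $\delta$, so there is $c>0$ with $u_2-u_1\ge c$ whenever $u_1<u_2<u_1+1$ and $\delta(u_1)=\delta(u_2)$.
\item Choose such a pair with $u_2-u_1$ minimal. Then $\delta|_{[u_1,u_2]}$ is a simple closed curve, so its degree is $0$ or $\pm1$. Degree $0$ is excluded by deleting it.
\item Write $L(j)$ for the minimal length in degree $j$, so $L(-1)=L(1)>0$. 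For every $j\ge 2$ you get either (A$_j$) $L(j)\ge L(1)+L(j-1)$ or (B$_j$) $L(j)\ge L(1)+L(j+1)$.
\item Conditions (B$_j$) and (A$_{j+1}$) together give $0\ge 2L(1)$, which is impossible. Hence (B$_j$) forces (B$_{j+1}$), and then (B$_{j+m}$) for all $m$. This gives $L(j)\ge mL(1)$ for every $m$, which is absurd.
\item Therefore (A$_j$) holds for all $j\ge2$, and $L(j)\ge jL(1)$ follows by induction.
\end{itemize}

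\textbf{Minor slips.}
\begin{itemize}
\item Centralizers in $\Z^2$ are not cyclic. Primitive roots still exist, so this does no harm.
\item For genus at least $2$ the cover $A\to X$ is not regular, so there is no cocompact deck action to invoke. Existence of degree-$n$ minimizers in $A$ follows instead by lifting minimizers from $X$, which exist by Arzel\`a--Ascoli.
\end{itemize}
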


    The lemma implies that, for every $z\in \Z^2$, its stable norm $\norm{z}_\textup{st}$ is equal to the length of the shortest curve in $X$ in the free homotopy class induced by $z$. In particular, 
    \begin{equation}\label{eq:characterization-stable-norm}
        \norm{z}_\textup{st} = \min_{x\in [0,1)^2} d(x,x+z)
    \end{equation}
    for every $z \in \Z^2$. This allows us to prove that the systole is stable when passing from the original distance to the stable norm.
    
    \begin{proposition}\label{proposition-limiting-systole}
        Let $d$ be a $\Z^2$--periodic metric on $\R^2$ and $\| \cdot \|_{\textup{st}}$ its stable norm. Then the systoles of $( \mathbb{R}^2/\mathbb{Z}^2, d )$ and $( \mathbb{R}^2/ \mathbb{Z}^2, \| \cdot \|_{\textup{st}} )$ are equal.
    \end{proposition}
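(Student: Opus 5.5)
The plan is to compute both systoles as minima over nonzero lattice vectors and to check that the two minima agree. Since the torus is homeomorphic to $\R^2/\Z^2$, every non-contractible closed curve in $(\R^2/\Z^2,d)$ lifts to a path in $\R^2$ from some $x$ to $x+z$ with $z\in\Z^2\setminus\{0\}$. Its length is at least $d(x,x+z)$, because $d$ is the lifted length distance \eqref{eq:def-universal-cover-dist-torus}. Conversely, a path realizing $d(x,x+z)$ up to $\varepsilon$ projects to a non-contractible closed curve. Hence
\begin{align*}
    \sys(\R^2/\Z^2,d) = \inf_{z\in\Z^2\setminus\{0\}} \inf_{x} d(x,x+z) = \inf_{z\in\Z^2\setminus\{0\}} \norm{z}_{\st},
\end{align*}
where the last equality is \eqref{eq:characterization-stable-norm}. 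That characterization rests on \Cref{lemma-homotopy-class-minimizer}, equivalently on the lower bound $\norm{z}_{\st}\le d(x,x+z)$ in \Cref{theorem-bounded-distance-theorem}.

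Next I will apply the same reasoning to the flat torus $(\R^2/\Z^2,\norm{\cdot}_{\st})$. Its lifted distance is the norm distance itself, which is already a length metric on $\R^2$ and is translation invariant. A non-contractible loop lifts to a path from $x$ to $x+z$ with $z\neq 0$, and its length is at least $\norm{z}_{\st}$. The straight segment from $x$ to $x+z$ attains this value. So the systole of the flat torus is also $\min_{z\neq0}\norm{z}_{\st}$, and the two systoles coincide.

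I expect the main obstacle to be the identification of the homotopy-theoretic systole with the infimum over lattice translations in the general length-space setting. Two points need checking. First, a closed curve is contractible if and only if its lift closes up, i.e.\ $z=0$; this is standard covering theory for $\pi\colon\R^2\to X$ with deck group $\Z^2$. Second, the inequality $d(x,x+z)\ge\norm{z}_{\st}$ must hold for every $x$ and not only at the basepoint; this is exactly the lower bound in \Cref{theorem-bounded-distance-theorem}.

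I will also note that the infimum over $z$ is attained. Since $\norm{\cdot}_{\st}$ is a genuine norm, it is comparable to the Euclidean norm, so only finitely many lattice vectors have bounded stable norm. This also shows the systole is positive, although positivity is not needed for equality. If the bounded distance theorem were unavailable, I would instead argue directly via subadditivity: $d(x,x+kz)\le k\,d(x,x+z)$, and dividing by $k$ and letting $k\to\infty$ gives $\norm{z}_{\st}\le d(x,x+z)$ once basepoint independence is known.
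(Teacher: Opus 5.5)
Your main line is the paper's proof: lift non-contractible loops to paths from $x$ to $x+z$ with $z\neq 0$, write $\sys(\R^2/\Z^2,d)$ as $\min_{z\neq 0}\min_x d(x,x+z)$, apply \eqref{eq:characterization-stable-norm}, and use straight segments to get $\sys(\R^2/\Z^2,\norm{\cdot}_{\st})=\min_{z\neq0}\norm{z}_{\st}$. Because you cite \eqref{eq:characterization-stable-norm} as already established, the argument as written is correct.

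Your commentary on what \eqref{eq:characterization-stable-norm} rests on is backwards, though, and the fallback in your last paragraph would not prove the proposition.

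The lower bound $\norm{z}_{\st}\le d(x,x+z)$ is the easy half. As you note yourself, it follows from $d(x,x+kz)\le k\,d(x,x+z)$ and basepoint independence, with no input from \Cref{lemma-homotopy-class-minimizer}. It gives only $\min_x d(x,x+z)\ge\norm{z}_{\st}$, hence only $\sys(\R^2/\Z^2,\norm{\cdot}_{\st})\le\sys(\R^2/\Z^2,d)$.

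The reverse inequality of systoles needs $\min_x d(x,x+z)\le\norm{z}_{\st}$, and this is where the lemma enters. Let $m_z=\min_x d(x,x+z)$ be the length of a shortest loop in the free homotopy class of $z$. The lemma gives $m_{kz}=k\,m_z$, so $d(x_0,x_0+kz)\ge m_{kz}=k\,m_z$, and hence $\norm{z}_{\st}\ge m_z$.

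This step uses the surface topology and does not follow from \Cref{theorem-bounded-distance-theorem} alone. The bounded distance theorem holds for $\Z^n$-periodic length metrics in every dimension, whereas $\norm{z}_{\st}=m_z$ fails for $n\ge 3$, e.g.\ for Hedlund-type metrics. So the claimed equivalence with the lower bound is false, and subadditivity by itself cannot replace \eqref{eq:characterization-stable-norm}.
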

    
    \begin{proof}
        Let $x \in \R^2$. Then
        $$\min_{z \in \mathbb{Z}^2\setminus\{0\}} d(x,x+z)$$
        is equal to the length of the smallest non-contractible closed curve in $( \mathbb{R}^2/\mathbb{Z}^2, d )$ starting and ending at the orbit of $x$. Recall that
        $$\sys (\mathbb{R}^2/\mathbb{Z}^2,d) = \min\{ \ell_d(\gamma) \colon \gamma \textup{ non-contractible closed curve in }\R^2/\Z^2\}.$$
        Therefore, by \eqref{eq:characterization-stable-norm} and the periodicity of the metric $d$, 
        \begin{align*}
            \sys (\mathbb{R}^2/\mathbb{Z}^2,d)  
            &= \min_{x \in [0,1)^2} \min_{z \in \Z^2\setminus\{0\}} d(x,x+z)
            \\
            &= \min_{ z \in \Z^2 \setminus \{0\} } \min_{ x \in [0,1)^2 } d(x,x+z)
            \\
            &= \min_{ z \in \Z^2 \setminus \{0\} } \norm{z}_{\st}.
        \end{align*}
        Clearly, the last term is equal to the systole on $(\R^2/\Z^2,\norm{\cdot}_\st).$
    \end{proof}

\subsection{Stability of the Hausdorff area}
    The goal of this subsection is to prove the following proposition. 
    
    \begin{proposition}\label{proposition-Hausdorff-area-drop}
        Let $d$ be a $\Z^2$-periodic length metric on $\R^2$, and $\norm{\cdot}_\textup{st}$ its stable norm. Then
        \begin{gather}\label{eq:h2-drops-in-prop}
            \cH^2_d( \R^2/\Z^2 ) \geq \cH^2_{\norm{\cdot}_\textup{st}}( \R^2/\Z^2 ).
        \end{gather}
    \end{proposition}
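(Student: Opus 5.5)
We follow the Burago--Ivanov strategy: compare the Hausdorff area of large fundamental domains in $(\R^2,d)$ with the area of their images under the rescaled distance, and then apply \Cref{proposition-minimality-of-two-dimensional-planes}. Fix a large integer $k$ and let $Q_k=[0,k]^2$. By periodicity, $\cH^2_d(Q_k)=k^2\,\cH^2_d(\R^2/\Z^2)$; this uses that the boundary of a fundamental domain is $\cH^2_d$-null. If it is not, we shift the domain, or simply use the inequality $\cH^2_d(Q_k)\le k^2\cH^2_d(\R^2/\Z^2)$ together with a covering argument. Likewise, $\cH^2_{\norm{\cdot}_\st}([0,k]^2)=k^2\,\cH^2_{\norm{\cdot}_\st}(\R^2/\Z^2)$. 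After rescaling by $1/k$, it therefore suffices to show
\[
\liminf_{k\to\infty}\frac{1}{k^2}\cH^2_d(Q_k)\ \ge\ \cH^2_{\norm{\cdot}_\st}([0,1]^2).
\]

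\textbf{Step 1: a $1$-Lipschitz map into a Banach space.} We embed the normed plane $\mathbb{V}=(\R^2,\norm{\cdot}_\st)$ linearly and isometrically into $\ell^\infty$, as in the proof of \Cref{proposition-minimality-of-two-dimensional-planes-affine}, using norming functionals $w_j\in\mathbb{V}^*$ of unit dual norm. We then build a $1$-Lipschitz map $v\colon(\R^2,d)\to\ell^\infty$ that approximates the inclusion at large scale. By \Cref{theorem-bounded-distance-theorem}, $w_j$ satisfies $w_j(z)\le\norm{z}_\st\le d(x,x+z)$ for lattice vectors. More generally, we want $|w_j(x)-w_j(y)|\le d(x,y)+C$ for all $x,y$. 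This holds because the stable norm is the limit of $d$, and $d(x,y)\ge\norm{x-y}_\st-C'$ follows from the bounded distance theorem plus the fact that the fundamental domain has bounded $d$-diameter. To remove the additive constant, we replace each $w_j$ by a McShane-type regularization
\[
f_j(x)=\inf_{y}\bigl(w_j(y)+d(x,y)\bigr).
\]
This $f_j$ is $1$-Lipschitz for $d$ and satisfies $|f_j-w_j|\le C$, since $f_j\le w_j$ and $w_j(y)+d(x,y)\ge w_j(x)-C$. Set $v=(f_j)_j$, which is $1$-Lipschitz into $\ell^\infty$ and within uniform distance $C$ of the linear embedding $\iota$.

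\textbf{Step 2: fixing the boundary and applying minimality.} Let $X_k=Q_k$ with the metric $d$. This is a metric disk, possibly with a non-length restricted metric, but \Cref{proposition-minimality-of-two-dimensional-planes} only requires a metric disk and a locally $1$-Lipschitz map. The image $v(\partial Q_k)$ lies within distance $C$ of $\iota(\partial Q_k)$, but it is not mapped homeomorphically onto a boundary of a planar domain. We therefore attach a collar. Let $X_k'$ be $Q_k$ glued along its boundary to an annulus $A_k=\partial Q_k\times[0,C]$, with a product-type metric, and extend $v$ to $X_k'$ by a straight-line homotopy from $v|_{\partial Q_k}$ to $\iota|_{\partial Q_k}$. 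This extension is $1$-Lipschitz once the annulus metric is chosen suitably, for instance as the $\ell^\infty$ product of $(\partial Q_k,d)$ and $[0,C]$. The new boundary then maps homeomorphically onto $\iota(\partial Q_k)=\partial\Omega_k$ with $\Omega_k=\iota(Q_k)$. \Cref{proposition-minimality-of-two-dimensional-planes} gives
\[
\cH^2_d(Q_k)+\cH^2(A_k)\ge\cH^2_{\norm{\cdot}_\st}(Q_k)=k^2\cH^2_{\norm{\cdot}_\st}([0,1]^2).
\]
We have $\cH^2(A_k)\lesssim C\,\ell_d(\partial Q_k)=O(k)$, where $\ell_d(\partial Q_k)=O(k)$ holds by choosing the boundary path to be a near-geodesic lattice polygon instead of the straight square. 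Dividing by $k^2$ and letting $k\to\infty$ yields \eqref{eq:h2-drops-in-prop}.

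\textbf{Main obstacle.} The delicate part is Step 2: ensuring that the glued space is genuinely a metric disk and that the extended map is locally $1$-Lipschitz. Equally delicate is controlling the boundary length, since the straight sides of $[0,k]^2$ could have infinite $d$-length. I would first try to replace $Q_k$ by the Jordan domain bounded by a concatenation of $d$-geodesic segments between consecutive lattice points on the boundary of $[0,k]^2$. Such segments can be chosen without self-intersections, perhaps by shortcutting. This gives boundary length $O(k)$ and changes area by $O(k)$ only, since the domain stays within bounded distance of $[0,k]^2$. The homotopy in the collar then has area bounded by length times $C$.
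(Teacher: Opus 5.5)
Your overall plan follows the paper's Burago--Ivanov route: a $1$-Lipschitz map near an isometric copy of $(\R^2,\norm{\cdot}_\st)$, a Jordan domain near the square, a collar with a straight-line homotopy, then \Cref{proposition-minimality-of-two-dimensional-planes} and periodicity. Your Step~1 is a valid and more elementary substitute for the calibrating functions the paper imports from Burago--Ivanov. The paper only ever uses that $\Phi\circ\gamma_n$ stays within bounded distance of $\Psi$, and your regularization $f_j=\inf_y(w_j(y)+d(\cdot,y))$ gives $\norm{v-\iota}\le C$ everywhere. The gap is in Step~2.

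The homotopy $(1-s/C)v(x)+(s/C)\iota(x)$ interpolates between $v$ and $\iota$ at the same points $x$. It is therefore $1$-Lipschitz only if the boundary curve carries a metric for which both $v$ and $\iota$ are $1$-Lipschitz. The collar area is $O(k)$ only if the curve has length $O(k)$ in that metric.

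On $(\partial Q_k,d)$ the map $\iota$ is not $1$-Lipschitz. You only have $\norm{x-y}_\st\le d(x,y)+C$, and locally $d$ can be much smaller than $\norm{\cdot}_\st$; think of a periodic conformal factor that is tiny near a point of $\partial Q_k$. Separately, the $\ell^\infty$ product gives Lipschitz constant up to $2$; you need the sum metric.

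Passing to a geodesic polygon does not repair this, and in general no single curve works. Take $f$ continuous, $1$-periodic, with infinite variation on $[0,1]$, let $h(y_1,y_2)=(y_1,y_2-f(y_1))$, and set $d(x,y)=|h(x)-h(y)|$. This is a $\Z^2$-periodic length metric whose stable norm is Euclidean. A curve $(c_1,c_2)$ rectifiable for both $d$ and $\norm{\cdot}_\st$ would have $c_1$, $c_2$ and $c_2-f\circ c_1$ of bounded variation. That is impossible once $c_1$ sweeps an interval of length one, since then $f\circ c_1$ has infinite variation. So the straight square has infinite $d$-length, and your geodesic polygon has a non-rectifiable image under $\iota$. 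Neither the $1$-Lipschitz extension nor the rectifiable-boundary hypothesis of \Cref{proposition-minimality-of-two-dimensional-planes} is then available.

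The missing idea is to decouple the two ends of the collar. In the paper the inner end is a $d$-rectifiable Jordan curve $\gamma_n$ and the outer end is the straight square $\Psi(\partial S_n)$. The curve $\gamma_n$ is parametrized by $\partial S_n$ with the intrinsic stable-norm metric $d_{\mathrm{int}}$, so that it is $1$-Lipschitz into $(\R^2,d)$ with $d(\gamma_n(x),x)$ bounded. The collar $\partial S_n\times[0,D']$ carries the metric $d_{\mathrm{int}}(x,y)+|s-t|$ and is glued via $\gamma_n$. The map $(1-s/D')\Phi(\gamma_n(x))+(s/D')\Psi(x)$ is then $1$-Lipschitz because both end maps are $1$-Lipschitz for $d_{\mathrm{int}}$.

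Such a parametrization exists because $\gamma_n$ is assembled from lifts of shortest closed curves in the classes $(1,0)$ and $(0,1)$. Their lengths are exactly $\norm{(1,0)}_\st$ and $\norm{(0,1)}_\st$, and this also gives injectivity without any shortcutting. The pieces of your polygon have $d$-length $d(0,e_i)$, which may exceed $\norm{e_i}_\st$. To use it, you would have to parametrize the collar circle by the maximum of the two arclength metrics (total length still $O(k)$). You would also need to track the correspondence with $\partial Q_k$ through the loop removal.
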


    \medskip

    For the convenience of the reader, we begin with a brief sketch of the proof, which follows the core strategy from \cite[Section 4.5]{Burago-Ivanov-asymptotic-volume-finsler-tori}.
    
    For $n\geq 1$, we consider $S_n\coloneqq [0,n)^2$. The idea is to use the minimality of two-dimensional planes, \Cref{proposition-minimality-of-two-dimensional-planes}, to compare $\cH^2_d(S_n)$ with $\cH_{\norm{\cdot}_\st}^2(S_n)$ and then use periodicity to obtain \eqref{eq:h2-drops-in-prop}. To apply \Cref{proposition-minimality-of-two-dimensional-planes}, we construct a linear isometric embedding $\Psi$ from $(\R^2,\norm{\cdot}_\textup{st})$ and a 1-Lipschitz map $\Phi$ from $(\R^2,d)$, both into the same Banach space $\mathbb{V}$, such that $\Phi(z) = \Psi(z)$ for every $z \in \Z^2$. It follows from the periodicity of $d$ that the distance between $\Psi(S_n)$ and $\Phi(S_n)$ is uniformly bounded. 
    We approximate each $S_n$ by a Jordan domain $X_n$ with rectifiable boundary and such that the area of $X_n$ and $S_n$ are comparable with an error term growing linearly in $n$. Next, we connect the boundaries $\Psi(\partial S_n)$ and $\Phi(\partial X_n)$ with a collar $C_n$. A key point in the construction of $X_n$ is that the area of $C_n$ increases linearly in $n$. This leads to a sequence of metric surfaces $X_n'$, each qualifying as a filling of $\Psi(\partial S_n)$ in the sense of \Cref{proposition-minimality-of-two-dimensional-planes}. Therefore,
    $$ \cH_d^2(X_n)+ \cH^2(C_n) = \cH^2(X_n') \geq \cH_{\norm{\cdot}_\st}^2(S_n).$$
    and hence,
    $$\limsup_{n \to \infty} \frac{\cH_d^2(S_n)}{n^2} = \limsup_{n \to \infty} \frac{\cH^2(X'_n)}{n^2} \geq  \limsup_{n \to \infty} \frac{\cH_{\norm{\cdot}_\st}^2(S_n)}{n^2}.$$
    Finally, \eqref{eq:h2-drops-in-prop} follows from the periodicity of $d$.
    \medskip

    The remainder of the section is devoted to the proof of \Cref{proposition-Hausdorff-area-drop}. We begin with the construction of the embedding and the $1$-Lipschitz map close to it.

    \begin{lemma}\label{lemma-embedding-via-calibrating-functions}
        There exists a Banach space $\mathbb{V}$, a linear isometric embedding 
        \linebreak $\Psi \colon (\R^2, \norm{\cdot}_\st) \to (\mathbb V,\norm{\cdot})$ and a $1$-Lipschitz map $\Phi \colon (\R^2,d) \to (\mathbb{V},\norm{\cdot})$ satisfying $\Phi(z) = \Psi(z)$ for every $z \in \Z^2$.
    \end{lemma}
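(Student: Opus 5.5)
The plan is to build $\Phi$ out of \emph{calibrating functions}, one for each linear functional of dual stable norm one. I take $\mathbb V=\ell^{\infty}$ and index its coordinates by a countable dense set $\{f_j\}_{j\in\N}$ in the unit sphere of the dual norm $\norm{\cdot}_\st^{*}$ on $(\R^2)^*$. Such a set exists because the dual is two-dimensional.

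Define $\Psi$ linearly by $\Psi(v)=(f_j(v))_{j\in\N}$. Each $f_j$ has dual norm one, so $|f_j(v)|\leq\norm{v}_\st$ and $\Psi$ lands in $\ell^\infty$. By Hahn--Banach there is a norming functional $f$ of dual norm one with $f(v)=\norm{v}_\st$. Approximating $f$ by the $f_j$ gives $\sup_j f_j(v)=\norm{v}_\st$, so $\Psi$ is a linear isometric embedding.

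For each $f$ with $\norm{f}^{*}_\st=1$ I set
\begin{align*}
    \varphi_f(x)=\inf_{z\in\Z^2}\bigl(d(x,z)+f(z)\bigr),\qquad x\in\R^2 .
\end{align*}
The key steps, in order, are the following.

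\textbf{Step 1: finiteness.} The lower bound of \Cref{theorem-bounded-distance-theorem} gives $d(0,z)\geq\norm{z}_\st\geq -f(z)$. Hence $d(x,z)+f(z)\geq d(x,z)-d(0,z)\geq -d(x,0)$, so $\varphi_f$ is finite.

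\textbf{Step 2: Lipschitz bound.} $\varphi_f$ is an infimum of $1$-Lipschitz functions of $x$, hence $1$-Lipschitz on $(\R^2,d)$.

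\textbf{Step 3: calibration on the lattice.} For $w\in\Z^2$, the choice $z=w$ gives $\varphi_f(w)\leq f(w)$. Conversely, \Cref{theorem-bounded-distance-theorem} together with translation invariance of $d$ gives $d(w,z)\geq\norm{w-z}_\st\geq f(w)-f(z)$. Hence $\varphi_f(w)=f(w)$, and in particular $\varphi_f(0)=0$.

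Now define $\Phi(x)=(\varphi_{f_j}(x))_{j\in\N}$. Since $\varphi_{f_j}(0)=0$ and each $\varphi_{f_j}$ is $1$-Lipschitz, we get $|\varphi_{f_j}(x)|\leq d(x,0)$ uniformly in $j$. So $\Phi$ takes values in $\ell^\infty$. Taking the supremum over $j$ of the coordinatewise bounds shows $\Phi$ is $1$-Lipschitz into $\ell^\infty$. Step 3 gives $\Phi(z)=\Psi(z)$ for all $z\in\Z^2$.

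The only real obstacle is the lower bound $d(x,x+z)\geq\norm{z}_\st$. It is needed both for Step 1 and for the exact equality in Step 3, and it is supplied by \Cref{theorem-bounded-distance-theorem}. It can also be read off directly from subadditivity in \eqref{eq-def-stable-norm}: $d(x_0,x_0+kz)\leq k\,d(x_0,x_0+z)$ by periodicity and the triangle inequality. Everything else is routine bookkeeping with the supremum norm.
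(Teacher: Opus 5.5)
Your proof is correct and has the same architecture as the paper's: $\Psi$ evaluates unit-norm functionals of the dual stable norm, $\Phi$ evaluates $1$-Lipschitz functions that agree with these functionals on $\Z^2$, and both land in an $\ell^\infty$-space. The paper indexes by the whole dual unit sphere $S$, so it needs no density argument, and it cites Burago--Ivanov for the calibrating functions. You instead use a countable dense subset and construct the calibrating functions explicitly as $\varphi_f(x)=\inf_{z\in\Z^2}(d(x,z)+f(z))$; this makes the argument self-contained and shows that only the lower bound $\norm{z}_\st\leq d(x,x+z)$ is needed.
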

    
    \begin{proof}
         Let $S$ be the unit sphere of the dual of $( \mathbb{R}^2, \| \cdot \|_{\textup{st}} )$. Below, $( \ell^{\infty}(S), \norm{\cdot} )$ denotes the space of bounded functions $S \to \R$ equipped with the supremum norm. As is clear, the map $\Psi \colon ( \R^2, \| \cdot \|_{\textup{st}} ) \to (\ell^{\infty}( S ),\norm{\cdot})$, where $x \mapsto ( h(x) )_{ h \in S }$, is a linear isometric embedding.
         Furthermore, it follows from \cite[Definition 4.1 and Proposition 4.3]{Burago-Ivanov-asymptotic-volume-finsler-tori} that for every $h \in S$, there exists a $1$-Lipschitz function $f_h\colon (\R^2,d) \to \R$ satisfying $f_h(z) = h(z)$ for every $z \in \Z^2$. Notice that \cite[Proposition 4.3]{Burago-Ivanov-asymptotic-volume-finsler-tori} is formulated for normed spaces. However, the same proof applies for $(\R^2,d)$. Finally, we define the $1$-Lipschitz map $\Phi \colon ( \R^2, d ) \to (\ell^{\infty}(S),\norm{\cdot}),$ where $x \mapsto ( f_h(x) )_{ h \in S }$. Clearly, $\Phi(z) = \Psi(z)$ for every $z \in \Z^2$.
    \end{proof}
    
    Next, we construct the Jordan domains.
    
    \begin{lemma}\label{lemma-curve-close-to-grid}
        Let $d$ be a $\Z^2$--periodic length metric on $\R^2$, and $\norm{\cdot}_\textup{st}$ its stable norm. There exist $N,C,D>0$ such that, for all $n\geq N$, there exists an injective curve $\gamma_n\colon(\partial S_n,\norm{\cdot}_\textup{st})\to (\R^2,d)$ with the following properties:
        \begin{enumerate}
            \item $\gamma_n$ is piecewise 1-Lipschitz on the edges of $\partial S_n$;
            \item $d(\gamma_n(x),x)<D$ for every $x \in \partial S_n$;
            \item the closure $X_n$  of the bounded component of $\R^2\setminus \gamma_n$ satisfies
            $$\cH^2_d(S_n) + Cn \geq \cH_d^2(X_n).$$
        \end{enumerate}
    \end{lemma}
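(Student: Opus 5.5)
The plan is to build $\gamma_n$ edge by edge out of $d$-geodesics between lattice points, using the bounded distance theorem (\Cref{theorem-bounded-distance-theorem}) to control lengths, and then remove self-intersections so that the resulting closed curve is a Jordan curve. First I fix the vertices. The corners of $S_n$ are lattice points, so along each edge of $\partial S_n$ (for instance $[0,n]\times\{0\}$) I consider the lattice points $(k,0)$, $0\le k\le n$. Consecutive points differ by $e_1$, and $d((k,0),(k+1,0)) = d(0,e_1)$ by periodicity. This value could be strictly larger than $\norm{e_1}_\st$, so I cannot connect consecutive lattice points and stay $1$-Lipschitz with respect to $\norm{\cdot}_\st$. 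Instead I connect the two corners of each edge directly by a $d$-geodesic $\sigma$ of length $d(0,ne_1)\le n\norm{e_1}_\st + C_0$, with $C_0$ given by \Cref{theorem-bounded-distance-theorem}. Parametrizing by the $\norm{\cdot}_\st$-arclength of the edge, this is $(1+C_0/(n\norm{e_1}_\st))$-Lipschitz. To obtain genuinely $1$-Lipschitz I shorten slightly: I choose a lattice point $p$ near the corner and parametrize so that the small excess is absorbed. More precisely, I use a Fekete-type subadditivity argument: $d(0,ke_1)\ge k\norm{e_1}_\st$, so choosing the geodesic from $0$ to $ne_1$ and reparametrizing at constant speed $d(0,ne_1)/(n\norm{e_1}_\st)\le 1+C_0/n$ gives a map that is $1$-Lipschitz up to an additive error $C_0$. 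I then adjust the statement of (1) to be read piecewise on the edges, absorbing the constant by allowing $D$ to be large. If strict $1$-Lipschitz is really needed, I parametrize the geodesic by $d$-arclength on an initial subinterval and stay constant at the endpoint for the remaining $\le C_0$ portion, which is $1$-Lipschitz.

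For (2), I take a point $x$ on the edge at $\norm{\cdot}_\st$-distance $t$ from the corner $0$. Then $\gamma_n(x)$ lies on the geodesic at $d$-distance about $t$ from $0$, and hence at $d$-distance about $d(0,ne_1)-t$ from $ne_1$. If $d(\gamma_n(x),x)$ were large, the comparison between $d$ and $\norm{\cdot}_\st$ would fail: the inequality $d(y,y')\ge \norm{y-y'}_\st - C_1$, which extends to all real points by periodicity, combined with the near-equality in the triangle inequality along a geodesic, forces $\gamma_n(x)$ to lie within bounded $\norm{\cdot}_\st$-distance of the segment point $x$. This is where I expect the main obstacle. Near-geodesics for $d$ need not stay close to straight segments if the unit ball of $\norm{\cdot}_\st$ is not strictly convex: they could track a different direction of a flat face. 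For axis directions, however, the deviation is bounded by the Morse-type lemma of Burago, namely that the bounded distance theorem implies $d$-geodesics between lattice points stay at bounded Hausdorff distance from the segment. I will invoke that quasi-isometry fact in the form $d(x,y)\le \norm{x-y}_\st + C$ and $\norm{x-y}_\st\le d(x,y)+C$. If flat faces cause trouble, I instead build each edge from many shorter geodesics between lattice points spaced at a fixed distance $m$, so that the deviation is at most of order $m$.

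For injectivity and (3), the concatenation of the four edges is a closed curve $c$ lying in the $D$-neighborhood of $\partial S_n$. I extract a Jordan curve by removing loops: parametrize $c$ and, whenever $c(s)=c(t)$, excise the arc between them. Done on the outer boundary of the unbounded complementary component, this yields a Jordan curve $\gamma_n$ contained in $c$, still within distance $D$ of $\partial S_n$. The parametrization is reorganized so that excised portions become constant pauses, keeping the map $1$-Lipschitz; if strict injectivity is demanded, I reparametrize to remove the pauses, which only decreases Lipschitz constants. Then $X_n \subset S_n\cup N_D(\partial S_n)$, where $N_D(\partial S_n)$ is covered by $O(n)$ fundamental domains translated by lattice vectors, each of area $\cH^2_d([0,1)^2)$. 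This gives $\cH^2_d(X_n)\le \cH^2_d(S_n)+Cn$ for $n\ge N$.
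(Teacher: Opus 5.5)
Your construction does not deliver (1) and (2) together, and the claim you rely on for (2) is false in this generality.

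\textbf{Property (1).} As written, each edge is sent to a $d$-geodesic from $0$ to $ne_1$. A map sending the endpoints of an edge of $\norm{\cdot}_\st$-length $n\norm{e_1}_\st$ to $0$ and $ne_1$ can only be $1$-Lipschitz if $d(0,ne_1)\le n\norm{e_1}_\st$. In general, however, $d(0,ne_1)\ge n\norm{e_1}_\st+\delta$ for some $\delta>0$ and all $n$. To see this, multiply the flat metric of $\R^2/\Z^2$ by a conformal factor $\rho\ge 1$ that exceeds $1$ near the image of the origin and equals $1$ elsewhere. A horizontal closed geodesic avoiding the bump shows $\norm{e_1}_\st=1$, while every path from $0$ to $ne_1$ pays a fixed extra cost inside the bump.

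Pausing at the endpoint only helps for a curve \emph{shorter} than the edge. Enlarging $D$ does not change a Lipschitz constant. You could repair (1) by moving the corners inward by a bounded number $c$ of lattice steps, so that the geodesic has length at most $(n-2c)\norm{e_1}_\st+C_0\le n\norm{e_1}_\st$. You gesture at this, but then revert to $0\to ne_1$.

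\textbf{Property (2).} Even with that repair, (2) breaks. There is no Morse-type lemma for $d$-geodesics between lattice points, and axis directions are not special. Take $d$ to be the $\ell^\infty$ metric itself, which is translation invariant with $\norm{\cdot}_\st=\norm{\cdot}_\infty$. The broken line $0\to(n/2,n/2)\to(n,0)$ is then a $d$-geodesic lying at distance $n/2$ from the edge. Your fallback of chaining geodesics between $kme_1$ and $(k+1)me_1$ does bound the deviation by $O(m)$. But its length $(n/m)\,d(0,me_1)$ then exceeds $n\norm{e_1}_\st$ by an amount that is in general linear in $n$ (bump example again), and no bounded shift of the corners absorbs that. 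Letting $m$ grow with $n$ instead loses the uniform bound in (2).

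\textbf{The missing idea.} The paper builds the edges from the periodic lift $\alpha_0$ of a shortest closed curve $\alpha$ in the free homotopy class of $e_1$ (similarly $\beta_0$ for $e_2$), so that $\alpha_0(t+1)=\alpha_0(t)+e_1$. By \Cref{lemma-homotopy-class-minimizer} and the lower bound in \Cref{theorem-bounded-distance-theorem}, $d(\alpha_0(s),\alpha_0(t))=|s-t|\,\norm{e_1}_\st$ for all $s,t$. So the curve is exactly as short as the stable norm demands, which gives (1). Periodicity alone confines it to a bounded neighbourhood of a horizontal line, which gives (2) whatever the shape of the unit ball of $\norm{\cdot}_\st$.

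The corners are then not lattice points. They are the last intersection point $\alpha_0(a)$ of $\alpha_0$ with $\beta_0$ and the first intersection point $\alpha_0(b_n)$ of $\alpha_0$ with $\beta_0+ne_1$. Periodicity gives $b_n-a\le n$, so $x\mapsto\alpha_0(a+(b_n-a)x/n)$ is $1$-Lipschitz on the bottom edge. Cutting at first and last intersections also yields injectivity directly, with no general loop removal. That matters, since the boundary of the unbounded complementary component of a closed curve need not be a Jordan curve. Your covering argument for (3) is then the same as the paper's.
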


    \begin{proof}
        Let $n> 2N$ for some large $N$, yet to be fixed. Let $\alpha,\beta\colon S^1\to ( \R^2/\Z^2, d )$ be shortest curves in their free homotopy classes, whose deck transformations correspond to $(1,0),(0,1)\in \Z^2$, respectively; existence follows from the Arzelá--Ascoli theorem using constant-speed parameterizations. Let $\alpha_0\colon \R\to \R^2$ be a $1$-periodic lift of $\alpha$ such that $\alpha_0(0)\in S_1$ and consider a vertical translation $$\alpha_n(t)=\alpha_0(t)+(0,n).$$ The lifts $\beta_0,\beta_n$ are defined the same way, except that $$\beta_n(t)=\beta_0(t)+(n,0)$$ instead. By \Cref{lemma-homotopy-class-minimizer}, it follows that the curves are injective. When $N$ is large enough, every $n > N$ is such that $\alpha_0$ and $\alpha_n$ have disjoint images. Obviously, we may also require that $\beta_0$ and $\beta_n$ have disjoint images.
    
        We form $\gamma_n$ by considering suitable subcurves of $\alpha_0$, $\beta_0$, $\alpha_n$, and $\beta_n$. To this end, we let $I_0 = \{ x \in \R \colon \alpha_0(x) \in \im \beta_1 \}$ and $I_n = \{ x \in \R \colon \alpha_0(x) \in \im\beta_n \}$. Notice that $I_n = I_0 + n$, and let $a \coloneqq \sup I_0$ and $b_n \coloneqq \inf I_n$. There exists $N \in \N$ such that every $n > N$ satisfies $L_n \coloneqq b_n-a > 0$. Let $s = \beta_0^{-1}(\alpha_0(a))$ and $t=\beta_n^{-1}(\alpha_0(b_n))$. We emphasize that, by the $1$-periodicity of the curves, the parameters $a$, $L_0 = L_n-n$, $s$, and $t$ are constant with respect to $n$. 
    
        Define $\gamma_n\colon(\partial S_n,\norm{\cdot}_\textup{st})\to (\R^2,d)$ as follows:
        \begin{gather*}
            \gamma_n(x,y)=
            \begin{cases}
                \alpha_0(a+L_n\cdot x/n),& \quad \text{when $y=0$},\\
                \alpha_n(a+L_n\cdot x/n), &\quad \text{when $y=n$},\\
                \beta_0(s+y), &\quad \text{when $x=0$, and}\\
                \beta_n(t+y), &\quad \text{when $x=n$}.
            \end{cases}
        \end{gather*}
        It is straightforward to check that $\gamma_n$ is well-defined, continuous, and injective. In particular, $\gamma_n$ is a homeomorphism. Furthermore, by \Cref{lemma-homotopy-class-minimizer}, it holds that $\norm{(1,0)}_\textup{st}=\ell(\alpha)$ and $\norm{(0,1)}_\textup{st}=\ell(\beta)$. This implies that $\gamma_n$ is 1-Lipschitz on each edge of $\partial S_n$ and hence, (1) follows.
        
        We claim that there exists a constant $D > 0$ such that for any $z\in \partial S_n$, we have $d(\gamma_n(z),z)<D$ for every $n> N$. To this end, consider a point $(x,0)\in \partial S_n$. We compute
        \begin{align*}
            d(\gamma_n(x,0),(x,0))&\leq
            d(\alpha_0(a+L_n\cdot x/n),\alpha_0(x))+d(\alpha_0(x),(x,0))\\
            &\leq \norm{(a+L_n\cdot x/n-x,0)}_\textup{st}+d(\alpha_0(x),(x,0))\\
            &\leq\norm{(a,0)}_\textup{st}+n^{-1}|x|\cdot|L_n-n|\norm{(1,0)}_\textup{st}+d(\alpha_0(x),(x,0)),
        \end{align*}
        where all terms in the final expression have upper bounds independent of $x$ and $n$. A similar computation yields the same conclusion for points on other edges of the domain. Thus (2) holds.        
        For proving (3), we may assume that $N>0$ is large enough such that, for every $n>2N$,
        $d(\partial[-N,n+N)^2,\partial S_n)>D.$
        For $n > 2N$, let $X_n$ be the closure of the bounded component of $\R^2\setminus \gamma_n$. We have $X_n\subset [-N,n+N)^2$ and therefore,
        $$\cH_d^2(X_n)-\cH_d^2(S_n)\leq \cH_d^2([-N,n+N)^2)-\cH_d^2(S_n),$$
        which, by the $\Z^2$--periodicity of $d$, can be rewritten as
        $$\cH_d^2(X_n)-\cH_d^2(S_n)\leq ((n+2N)^2-n^2)\cH_d^2(S_1).$$
        This further simplifies to
        $$\cH_d^2(X_n)-\cH_d^2(S_n)\leq (4Nn+4N^2)\cH_d^2(S_1),$$
        which completes the proof.
    \end{proof}

    Finally, we construct the metric surfaces $X'_n$. From now on, we adopt the notation from \Cref{lemma-embedding-via-calibrating-functions,lemma-curve-close-to-grid}.

    \begin{lemma}\label{lemma:metric-surfaces-construction}
        There exists $C'>0$ such that for every $n \geq N$ there exists a metric disk $X_n'$ satisfying 
        $$\cH^2(X_n) +C'n \geq \cH^2_d(X'_n),$$
        and a $1$-Lipschitz map $u_n \colon X'_n \to \mathbb{V}$ for which $u_n|_{ \partial X'_n }$ is a homeomorphism onto $\Psi( \partial S_n)$.
    \end{lemma}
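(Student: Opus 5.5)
The plan is to take $X_n'$ to be $X_n$ with an abstract collar glued along its boundary, and to define $u_n$ by gluing $\Phi|_{X_n}$ to a linear interpolation between $\Phi\circ\gamma_n$ and $\Psi$ on the collar. Concretely, the collar is $A_n=\partial S_n\times[0,1]$. The map on it is
\begin{align*}
h_n(x,t)=(1-t)\,\Phi(\gamma_n(x))+t\,\Psi(x)\in\mathbb V .
\end{align*}
It will carry the metric $\rho_n((x,s),(y,t))=\max\{L\,\delta(x,y),\,M|s-t|\}$. Here $\delta$ is the intrinsic length distance on $\partial S_n$ induced by $\norm{\cdot}_\st$ along the edges, and $L,M\ge 1$ are constants independent of $n$, chosen below. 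The space $X_n'$ is $X_n\sqcup A_n$ modulo $\gamma_n(x)\sim(x,0)$, with the induced quotient length pseudometric $d_n'$. The map $u_n$ equals $\Phi$ on $X_n$ and $h_n$ on $A_n$; these agree on the interface because $h_n(x,0)=\Phi(\gamma_n(x))$. Finally, $\partial X_n'=\partial S_n\times\{1\}$, and $u_n$ restricted there is $\Psi$, a homeomorphism onto $\Psi(\partial S_n)$ since $\Psi$ is a linear isometric embedding.

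The first step is to bound the Lipschitz constant of $h_n$ uniformly in $n$. In the $x$-variable, $\Phi\circ\gamma_n$ is $1$-Lipschitz on each edge by \Cref{lemma-curve-close-to-grid}(1) and \Cref{lemma-embedding-via-calibrating-functions}, and $\Psi$ is $1$-Lipschitz. Hence $h_n(\cdot,t)$ is $1$-Lipschitz with respect to $\delta$; the edges are concatenated, so this holds along $\partial S_n$. In the $t$-variable the speed is $\norm{\Psi(x)-\Phi(\gamma_n(x))}$. Pick $z\in\Z^2$ with $\norm{x-z}_\st\le c_0$, and use $\Phi(z)=\Psi(z)$. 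Then
\begin{align*}
\norm{\Psi(x)-\Phi(\gamma_n(x))}\le \norm{x-z}_\st+d(z,x)+d(x,\gamma_n(x))\le c_0+c_1+D=:M,
\end{align*}
where $c_1$ bounds $d(z,x)$ for $x$ in a unit cell (periodicity), and the last bound is \Cref{lemma-curve-close-to-grid}(2). So with $L=1$ and this $M$, $h_n$ is $1$-Lipschitz with respect to $\rho_n$ (the sup-type product metric). Then $\mathcal H^2_{\rho_n}(A_n)\le C''\,L\,\mathrm{length}_\st(\partial S_n)\,M\le C'n$, since the perimeter of $\partial S_n$ in $\norm{\cdot}_\st$ is $2n(\norm{(1,0)}_\st+\norm{(0,1)}_\st)$.

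Next, $u_n$ is $1$-Lipschitz on $(X_n',d_n')$. Indeed, any chain of points realizing $d_n'$ splits into pieces inside $X_n$ (measured by $d$) and inside $A_n$ (measured by $\rho_n$). On each piece $u_n$ is $1$-Lipschitz, so the triangle inequality in $\mathbb V$ bounds the total. For the area bound, the restriction of $d_n'$ to $X_n$ is at most $d$, hence $\mathcal H^2_{d_n'}(X_n)\le\mathcal H^2_d(X_n)$. Combined with the collar estimate, this gives $\mathcal H^2(X_n')\le\mathcal H^2_d(X_n)+C'n$.

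The step I expect to be the main obstacle is showing that $(X_n',d_n')$ is a genuine metric disk, i.e.\ that $d_n'$ is a metric inducing the quotient topology. Topologically, gluing a closed disk and an annulus along a boundary homeomorphism gives a disk, and a compact quotient is homeomorphic to it once $d_n'$ separates points. For separation I would show that shortcuts through the collar never help on $X_n$. A detour entering at $(a,0)$ and exiting at $(b,0)$ has $\rho_n$-length at least $\delta(a,b)\ge d(\gamma_n(a),\gamma_n(b))$, by the edgewise $1$-Lipschitz property of $\gamma_n$. Hence $d_n'=d$ on $X_n$. Points in the collar are separated by their $t$-coordinates, or by $\delta$ together with a similar comparison. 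The continuity of the identity from the disjoint-union topology then follows from compactness. If this comparison fails near corners of $S_n$, I would replace $\delta$ by a suitably larger multiple $L\delta$; this only changes $C'$.
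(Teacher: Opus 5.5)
Your construction is the paper's. You glue a collar over $\partial S_n$ to $X_n$ along $\gamma_n$, use $\Phi$ on $X_n$ and the linear interpolation between $\Phi\circ\gamma_n$ and $\Psi$ on the collar, and bound the collar area linearly in the perimeter of $\partial S_n$. Your $t\in[0,1]$ with weight $M$ is just the paper's $s\in[0,D']$ rescaled.

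One step fails as written, however. With $L=1$, the claim that $h_n$ is $1$-Lipschitz for the max-type metric $\rho_n$ does not follow. Your two separate bounds only combine to
\[
\norm{h_n(x,s)-h_n(y,t)}\le \delta(x,y)+M|s-t|\le 2\,\rho_n\bigl((x,s),(y,t)\bigr),
\]
and nothing prevents the variation in $x$ and the variation in $t$ from pointing in the same direction in $\mathbb V$. Being $1$-Lipschitz in each variable separately gives $1$-Lipschitz for the sum metric, not for the max metric. This is why the paper puts $d_{\mathrm{int}}(x,y)+|s-t|$ on $\partial S_n\times[0,D']$.

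The repair is immediate in your setup: use $\rho_n=\delta(x,y)+M|s-t|$, or take $L=2$ and replace $M$ by $2M$. The seam comparison only needs $\rho_n((a,0),(b,0))\ge\delta(a,b)\ge d(\gamma_n(a),\gamma_n(b))$, which survives enlarging $L$. The collar area changes only by a constant factor. Your worry about corners is unnecessary: since $\delta$ is intrinsic, edgewise $1$-Lipschitz already gives $1$-Lipschitz on all of $\partial S_n$ by concatenation, exactly as you argue for $h_n(\cdot,t)$.

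With that fix the proof is complete. Your verification that $(X_n',d_n')$ is a metric disk (separation of points, then compactness to identify the topologies) is more explicit than the paper, which takes it for granted. Make sure $d_n'$ is the chain (gluing) quotient pseudometric rather than an induced length metric. The restriction $d|_{X_n}$ need not be a length metric, and your claims $d_n'\le d$ and $d_n'=d$ on $X_n$ rely on the chain definition.
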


    \begin{proof}
        Fix $n \geq N$. Let $x \in \partial S_n$ and $z$ be a closest point of $x$ in $\partial S_n\cap \Z^2$. Then, using that $\Psi(z) = \Phi(z)$ for every $z \in \Z^2$, we obtain
        \begin{align*}
            \norm{\Phi(\gamma_n(x))-\Psi(x)}&\leq\norm{\Phi(\gamma_n(x))-\Phi(z)}+\norm{\Psi(z)-\Psi(x)}\\
            &\leq d(\gamma_n(x),z)+\norm{z-x}_\textup{st}\\
            &\leq d(\gamma_n(x),x)+d(x,z)+\norm{z-x}_\textup{st}
            \leq D'
        \end{align*}
        for a constant $D' > 0$ independent of $n \geq N$ and $x$.
        
        Let $C_n \coloneqq  \partial S_n\times[0, D']$ be the cylinder equipped with the distance $d_{C_n}$ defined by
        $$d_{C_n}((x,s),(y,t)) = d_\textup{int}(x,y) + |s-t|;$$
        here $d_\textup{int}$ denotes the intrinsic distance on $\partial S_n \subset (\R^2,\norm{\cdot}_\st)$. We glue $C_n$ to $\partial X_n$ by identifying $(x,0)$ with $\gamma_n(x)$ for $x \in \partial S_n$. We denote the resulting space by $X'_n$ and equip it with the quotient distance $d_{X_n'}$. We recall the key properties of $d_{X_n'}$. The gluing map $(x,0) \mapsto \gamma_n(x)$ is $1$-Lipschitz since $\partial S_n$, seen as a subset of $C_n$, has the intrinsic distance. This allows us to define $d_{X'_n}$ in three pieces: Firstly, 
        \begin{equation}\label{equation-isometric-part}
            d_{X'_n}(x,y) =d(x,y)
        \end{equation}
        for all $x, y \in \partial X_n$, and also for $x,y \in X_n$. Thus, $X_n\xhookrightarrow{} X'_n$ is an isometry. Moreover, if $a = (x,s), b = (y,t) \in C_n$, let
        \begin{align*}
            D( a, b )
            =
            &\min_{ z, z' \in \partial S_n}\{
            d _{C_n}(a, (z,0) ) 
            + 
            d( \gamma_n(z), \gamma_n(z'))
            +
            d_{C_n}((z',0),b)
            \}.
        \end{align*}
        Then 
        \begin{equation}\label{equation-1-lipschitz-part}
            d_{X'_n}( (x,s), (y,t ) ) = \min\big\{ d_{C_n}((x,s),(y,t)), D( (x,s), (y,t) ) \big\}.
        \end{equation}
        This implies that the inclusion $C_n \xhookrightarrow{} X'_n$ is $1$-Lipschitz. Finally, if $x \in X_n$ and $(y,t) \in C_n$, then
        \begin{equation}\label{equation-jump-over-the-seam-part}
            d_{X'_n}(x,(y,t))
            =
            \min_{ z \in \partial X_n}
            d( x, \gamma_n(z))
            +
            d_{C_n}( (z,0), (y,t) ).
        \end{equation}
        The triangle inequality of $d_{X'_{n}}$ holds because the gluing map is $1$-Lipschitz.

        Now, let $H_n \colon (C_n,d_{C_n}) \to (\mathbb{V},\norm{\cdot})$, where $(x,s) \mapsto (1-\frac{s}{D'})\Phi(\gamma_n(x)) + \frac{s}{D'} \Psi(x)$. Notice that $H_n$ is $1$-Lipschitz. Indeed, for $(x,s),(y,t) \in C_n$, we have
        \begin{align*}
            \| H_n(x,s) - H_n(y,t) \|
            &\leq
            \| H_n(x,s) - H_n(y,s) \| + \| H_n(y,s) - H_n(y,t) \|
            \\
            &\leq d_{\mathrm{int}}(x,y) + 
            |s-t|\frac{\norm{\Phi(\gamma_n(y))-\Psi(y)}}{D'} 
            \\
            &\leq d_{\mathrm{int}}(x,y) + |s-t|.
        \end{align*}
        In the last inequality, we used that $\norm{\Phi(\gamma_n(y))-\Psi(y)}\leq D'$ for all $y \in \partial S_n$. Next, we consider the map $u_n \colon X'_n \to \mathbb{V}$ defined by
        \begin{equation*}
            u_n(x)
            =
            \left\{
            \begin{split}
                &\Phi(x), \quad&&\text{if $x \in X_n$,}
                \\
                &H_n(y,s),
                \quad&&\text{if $x = (y,s) \in C_n$.}
            \end{split}
            \right.
        \end{equation*}
        Since the embedding of $X_n$ into $X_n'$ is an isometry, recalling \eqref{equation-isometric-part}, it is immediate that the restriction of $u_n$ to $X_n$ is $1$-Lipschitz. Furthermore, by \eqref{equation-1-lipschitz-part} and as $H_n$ is 1-Lipschitz on $(C_n,d_{C_n})$, we conclude that the restriction of $u_n$ to $C_n$ is $1$-Lipschitz. The $1$-Lipschitz bound for points $x \in X_n$ and $(y,t) \in C_n$ follows from \eqref{equation-jump-over-the-seam-part}. Lastly, since $u_n|_{ \partial X'_n } = \Psi|_{ \partial S_n } $, it holds that $u_n|_{ \partial X'_n }$ is a homeomorphism. 

        Finally, regarding bounds on the two-dimensional Hausdorff measure of $X'_n$, we note that
        \begin{align*}
            \mathcal{H}^{2}( X'_n ) = \mathcal{H}^{2}_{ d }( X_n ) + \mathcal{H}^{2}_{d_{C_n}}( C_n ).
        \end{align*}
        Moreover, by the coarea inequality applied to the coordinate projection $(x,s) \mapsto s$ and the $\Z^2$--periodicity of $d$, we have
        $$\mathcal{H}^{2}_{d_{C_n}}( C_n )
            \leq
            \frac{4}{\pi}
            D' (1+D') \mathcal{H}_{\norm{\cdot}_\st}^{1}( \partial S_n ) \leq\frac{4}{\pi}
            D' (1+D') n\cH_{\norm{\cdot}_\st}^1(\partial S_1)<\infty.$$
        This completes the proof.
    \end{proof}

    We conclude this section with the proof of the stability of the Hausdorff measure.
    
    \begin{proof}[Proof of \Cref{proposition-Hausdorff-area-drop}]
        For $n \geq N$, let $X_n'$ and $u_n \colon X_n'\to \mathbb{V}$ be as in \Cref{lemma:metric-surfaces-construction}. Recall that $\Psi \colon (\R^2,\norm{\cdot}_\st) \to (\mathbb{V},\norm{\cdot})$ is a linear isometric embedding. Each $u_n$ is $1$-Lipschitz and a homeomorphism onto $\Psi(\partial S_n)$ when restricted to the boundary of the metric disk $X_n'$. Thus, it follows from \Cref{proposition-minimality-of-two-dimensional-planes} that
        $$\cH^2(X_n')\geq \cH^2_{\norm{\cdot}_\textup{st}}(S_n)$$
        for every $n \geq N$. The measure estimates in \Cref{lemma-curve-close-to-grid,lemma:metric-surfaces-construction} imply that there exists $c>0$, independent of $n$, such that
        $$\cH^2_d(S_n) + nc \geq \cH_{\norm{\cdot}_\textup{st}}^2(S_n)$$
        for every $n \geq N$. By additivity of the measure, and the fact that $d$ is $\Z^2$-periodic, we have $\cH_d^2(S_n) = n^2 \cH_d^2(S_1)$ for every $n \geq N$, and similarly $\cH_{\norm{\cdot}_\textup{st}}^2(S_n) = n^2 \cH_{\norm{\cdot}_\textup{st}}^2(S_1)$. Therefore,
        \begin{gather*}
            \cH^2_d(S_1)+\frac{cn}{n^2}\geq \cH^2_{\norm{\cdot}_\textup{st}}(S_1)
        \end{gather*}
        for every $n \geq N$. We conclude that $\cH^2_d( \R^2/\Z^2 ) \geq \cH^2_{\norm{\cdot}_\textup{st}}( \R^2/\Z^2 )$ by passing to the limit $n \to \infty$. 
    \end{proof}

\subsection{Proof of \Cref{theorem-main-result}}\label{sec: rigidity}
    We prove \Cref{theorem-main-result} in this subsection. For this, we need the following well-known systolic inequality for flat Finsler tori.

    \begin{theorem}\label{theorem-systolic-inequality-flat-finsler-tori}
        Let $(\mathbb{T}, \norm{\cdot})$ be a $2$-dimensional torus with a flat Finsler metric. Then,
        \begin{equation}\label{eq:sytolic-ineq-flat-finsler}
            \textup{Area}_\mathrm{BH}(\mathbb{T}, \norm{\cdot}) \geq  \frac{\pi}{4} \sys(\mathbb{T}, \norm{\cdot})^2.
        \end{equation}
        Equality holds for the flat metric corresponding to the supremum norm.
    \end{theorem}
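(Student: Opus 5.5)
The plan is to reduce the statement to an elementary convex-geometry inequality about a centrally symmetric convex body and a lattice. Write $\mathbb{T} = \R^2/\Lambda$ with $\Lambda$ a lattice. After a linear change of coordinates, which preserves the ratio $\textup{Area}_\mathrm{BH}/\sys^2$ once the norm is transported, we may assume $\Lambda = \Z^2$. Let $K$ be the closed unit ball of $\norm{\cdot}$. By the formula recalled in the preliminaries,
\begin{align*}
  \textup{Area}_\mathrm{BH}(\R^2/\Z^2,\norm{\cdot}) = \frac{\pi}{\cL^2(K)}.
\end{align*}
As in \Cref{proposition-limiting-systole}, we also have $\sys = \lambda := \min_{z \in \Z^2\setminus\{0\}} \norm{z}$. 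Rescaling the norm, normalize $\lambda = 1$. The inequality \eqref{eq:sytolic-ineq-flat-finsler} then becomes $\cL^2(K) \le 4$.

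The key step is Minkowski's convex body theorem. Since $\lambda = 1$, the open unit ball $\interior K$ contains no nonzero point of $\Z^2$. Suppose instead that $\cL^2(K) > 4 = 2^2 \det \Z^2$. Then there is $r<1$ with $\cL^2(rK) > 4$, and $rK$ is a convex, centrally symmetric body. Minkowski's theorem gives a nonzero lattice point $z \in rK$, so $\norm{z} \le r < 1$, which contradicts $\lambda = 1$. Hence $\cL^2(K) \le 4$, and scaling back yields
\begin{align*}
  \textup{Area}_\mathrm{BH} \ge \frac{\pi}{4}\sys^2 .
\end{align*}
If Minkowski's theorem should not simply be quoted, I would prove it in the standard way. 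The body $\tfrac12 rK$ has area greater than $1$, so its projection to $\R^2/\Z^2$ cannot be injective. This produces points $x,y \in \tfrac12 rK$ with $x - y \in \Z^2\setminus\{0\}$, and $x-y \in rK$ by symmetry and convexity.

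For the equality case, take $\norm{\cdot} = \norm{\cdot}_\infty$. Then $K = [-1,1]^2$ has area $4$, so the area is $\pi/4$. The shortest nonzero lattice vectors, for instance $(1,0)$ and $(1,1)$, have sup-norm $1$, so the systole is $1$ and equality holds.

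There is no serious obstacle here. The only point needing care is the reduction from a general flat Finsler torus to the form $\R^2/\Z^2$ and the identification of the systole with the minimal norm of a nonzero lattice vector. The latter holds because closed geodesics in a flat torus are projections of straight segments $x \mapsto x+z$, exactly as in \Cref{proposition-limiting-systole}, and a linear map $A$ sending $\Lambda$ to $\Z^2$ turns $\norm{\cdot}$ into $\norm{A^{-1}\cdot}$ and is an isometry of the quotient tori.
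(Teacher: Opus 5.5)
Your proposal is correct and is essentially the paper's proof. You identify the systole with $\min_{z\in\Z^2\setminus\{0\}}\norm{z}$, normalize it to $1$, and apply Minkowski's first theorem to the unit ball $K$ to get $\cL^2(K)\le 4$, hence $\textup{Area}_\mathrm{BH}=\pi/\cL^2(K)\ge \pi/4$; the lattice reduction, the scaling derivation of the interior-point form of Minkowski, and the direct sup-norm check are sound additions.
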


    The result is essentially a reformulation of Minkowski's first theorem; see e.g. \cite[Theorem 5]{Balacheff-Gil-Moreno-de-Mora-finsler-tori}.

    \medskip

    \textbf{Minkowski’s first theorem}, 1896. Let $K \subset \R^2$ be a symmetric convex body such that $\interior(K) \cap \Z^2 = \{0\}$. Then its Lebesgue measure satisfies
    $$|K|\leq 4.$$
    
    Convex bodies that satisfy the previous inequality with equality are known as extremal bodies and have been well studied. An extremal body $K$ is either a parallelogram or a hexagon such that the integer translates of $\frac{1}{2}K$ form a tiling of $\R^2$; see e.g. \cite[Page 82]{geometry-of-numbers}. It follows that a flat Finsler torus $(\mathbb{T}, \norm{\cdot})$ satisfies \eqref{eq:sytolic-ineq-flat-finsler} with equality if and only if the unit disk of $\norm{\cdot}$ is an extremal body.

    \begin{proof}[Proof of Theorem~\ref{theorem-systolic-inequality-flat-finsler-tori}]
        Since $(\R^2/\Z^2,\norm{\cdot})$ is flat, we have
        $$\sys(\R^2/\Z^2,\norm{\cdot}) = \min_{z \in \Z^2\setminus \{0\}} \norm{z}.$$
        The systolic inequality is invariant under rescaling. Thus, we may assume that 
        $$\sys(\R^2/\Z^2,\norm{\cdot}) = \min_{z \in \Z^2\setminus \{0\}} \norm{z} = 1.$$
        In other words, the interior of the unit disk $K \subset \R^2$ of $\norm{\cdot}$ intersects $\Z^2$ only at the origin. Minkowski’s first theorem immediately implies 
        $$\textup{Area}_\mathrm{BH}(\mathbb{T}, \norm{\cdot}) = \int_{[0,1]^2} \frac{\pi}{|K|} \; dx \geq \frac{\pi}{4} =  \frac{\pi}{4} \sys(\R^2/\Z^2,\norm{\cdot})^2$$
        and equality holds if and only if $|K| = 4$.
    \end{proof}

    We can now complete the proof of the optimal systolic inequality for metric tori.
    \begin{proof}[Proof of \Cref{theorem-main-result}]
        Let $(X,d)$ be a length space homeomorphic to a torus. We write $\norm{\cdot}_\st$ for the stable norm on $\R^2$ induced by $d$ as described in Section \ref{sec: stable norm}.
        By \Cref{proposition-limiting-systole,proposition-Hausdorff-area-drop}, we have
        $$\sys(X,d) = \sys(\R^2/\Z^2, \norm{\cdot}_\st)$$
        and
        $$\cH_d^2(X) \geq \cH^2_{\norm{\cdot}_\st}(\R^2/\Z^2).$$
        Therefore, the optimal systolic inequality for flat Finsler tori, Theorem \ref{theorem-systolic-inequality-flat-finsler-tori}, implies
        $$\textup{Area}(X,d) = \cH^2_d(X) \geq\cH^2_{\norm{\cdot}_\st}(\R^2/\Z^2) \geq \frac{\pi}{4} \sys(\R^2/\Z^2, \norm{\cdot}_\st)^2 = \frac{\pi}{4}\sys(X,d)^2.$$
        The proof is complete.
    \end{proof}

\section{Systolic inequality for the real projective plane}\label{section-systolic-inequality-real-projective-plane}
    We prove \Cref{theorem-main-result-real-projective-plane} in this section. We recall the setup. Let $(X,d_X)$ be a length space homeomorphic to the real projective plane. We consider a locally isometric covering map $\pi \colon ( \S^2, d ) \to X$ whose covering group is generated by the antipodal map $g \colon \S^2 \to \S^2$, $x \mapsto -x$. The distance $d$ on $\S^2$ is defined similarly as in \eqref{eq:def-universal-cover-dist-torus}.
    Below, we show that a non-contractible curve of systolic length in $X$ lifts to an isometric image of a circle in the universal cover of $X$. This circle has a length twice that of the systole of $X$ and bounds a metric disk whose area is equal to that of $X$. The filling minimality of metric disks, \Cref{proposition-filling-minimality-of-circles}, then yields \Cref{theorem-main-result-real-projective-plane}. We note that the relation between the systole on the projective plane and the filling minimality of hemispheres was already used to prove the systolic inequality for projective planes equipped with a Finsler distance; cf. \cite{ivanov-systolic-ineq-proj}. We begin with the following observation.

     \begin{lemma}\label{lemma-description-of-the-systole}
        The systole of $X$ is equal to $\min_{ x \in \S^2 } d( x, g(x) )$.
    \end{lemma}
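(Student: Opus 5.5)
We prove the two inequalities separately, using only the covering $\pi \colon (\S^2,d) \to X$ and the definition of $d$ as the lifted length distance in \eqref{eq:def-universal-cover-dist-torus}. Recall that the non-contractible closed curves in $X$ are exactly those whose lifts starting at a point $x$ end at $g(x)$, since the covering group $\{\mathrm{id}, g\}$ is $\pi_1(X)\cong\Z/2$. Moreover, $\pi$ preserves lengths of curves, because $d$ is defined via $\ell_X(\pi\circ\gamma)$.

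\emph{Lower bound on the systole.} Let $\gamma$ be a non-contractible closed curve in $X$ based at $\pi(x)$. Its lift $\tilde\gamma$ from $x$ ends at $g(x)$ and satisfies $\ell_d(\tilde\gamma)=\ell_X(\gamma)$. Hence $\ell_X(\gamma)\geq d(x,g(x))\geq \inf_{y\in\S^2} d(y,g(y))$, and taking the infimum over $\gamma$ gives $\sys(X)\geq \inf_y d(y,g(y))$.

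\emph{Upper bound on the systole.} Fix $x\in\S^2$. Since $(\S^2,d)$ is a compact length space (a finite cover of the compact space $X$), it is geodesic by Hopf--Rinow. Take a geodesic $\sigma$ from $x$ to $g(x)$ with $\ell_d(\sigma)=d(x,g(x))$. Then $\pi\circ\sigma$ is a closed curve in $X$ of the same length, and it is non-contractible because its lift from $x$ ends at $g(x)\neq x$. Thus $\sys(X)\leq d(x,g(x))$ for every $x$.

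\emph{The infimum is a minimum.} The map $x\mapsto d(x,g(x))$ is continuous, since $g$ is an isometry of $d$ (by invariance of the lifted distance under deck transformations), and $\S^2$ is compact. So the infimum is attained, which gives the claimed formula.

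The main point needing care is that $d$ really is a length metric on $\S^2$ inducing the standard topology, with $\pi$ a local isometry, so that compactness and Hopf--Rinow apply. This follows as in the torus case from $X$ being a length space and $\pi$ being a covering map.
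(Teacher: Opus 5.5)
Your proof is correct and follows the same route as the paper: projecting a curve from $x$ to $g(x)$ gives a non-contractible loop of the same length, and lifting a non-contractible loop gives a curve from $x$ to $g(x)$. Beyond the paper, you also justify that the minimum is attained, via continuity of $x\mapsto d(x,g(x))$ and compactness, which the paper takes for granted; your use of a geodesic is harmless, though an almost-minimizing curve would suffice.
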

    
    \begin{proof}
        Let $x \in \S^2$. If $\alpha\colon I \to \S^2$ is a curve between $x$ and $g(x)$, then, $\pi\circ \alpha$ is a non-contractible curve in $X$. Thus,
        $$\min_{ x \in \S^2 }d( x, g(x) ) \geq \sys(X,d_X).$$
        On the other hand, if $\beta \colon I \to X$ is a non-contractible curve in $X$ starting and ending at $\pi(x)$, then $\beta$ has a lift to $(\S^2,d)$ connecting $x$ to $g(x)$. Hence, 
        $$\min_{ x \in \S^2 }d( x, g(x) ) \leq \sys(X,d_X).$$
        This completes the proof.
    \end{proof}

    Now, let $\gamma \colon [0,\ell] \to X$ be a unit-speed curve realizing the systole of $X$. Then there are two lifts $\gamma_+, \gamma_{-} \colon [0,\ell] \to (\S^2, d)$ of $\gamma$ along $\pi$ such that $\gamma_+ = g \circ \gamma_{-}$. Thus, we may form a closed curve $\theta \colon [0,2\ell] \to (\S^2, d)$ by concatenating $\gamma_{+}$ and $\gamma_{-}$. Clearly,
    \begin{equation}\label{equation-systole-double-loop}
        \ell(\theta) = 2 \ell(\gamma) = 2\ell= 2 \sys(X,d_X).
    \end{equation}
    We need the following lemma; for the statement, $\mathbb{S}^{1}(r)$ denotes the circle of radius $r$ and $d_{\mathrm{int}}$ the intrinsic length metric.
    
    \begin{lemma}\label{lemma-image-is-an-embedded-circle}
        There exists an isometric embedding $( \mathbb{S}^{1}( \ell/\pi ), d_{\mathrm{int}} ) \to ( \S^2, d )$ that parametrizes the image of $\theta$.
    \end{lemma}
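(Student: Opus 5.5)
The plan is to show that $\theta$, parametrized on $[0,2\ell]$ with unit speed and regarded as a map from $\mathbb{S}^1(\ell/\pi)$ (circumference $2\ell$), satisfies
\[
d(\theta(s),\theta(t)) = d_{\mathrm{int}}(s,t) = \min\{|s-t|,\,2\ell-|s-t|\}
\]
for all $s,t$. Since $\theta$ has unit speed, the inequality $d(\theta(s),\theta(t)) \leq d_{\mathrm{int}}(s,t)$ is immediate: the two arcs of the loop between $\theta(s)$ and $\theta(t)$ have lengths $|s-t|$ and $2\ell-|s-t|$. The work lies in the reverse inequality. It also gives injectivity, so the map is an embedding onto the image of $\theta$.

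First I record the equivariance of $\theta$. By construction, $\theta(t+\ell) = g(\theta(t))$ for all $t$, with parameters taken mod $2\ell$. Indeed, $\gamma_-$ is traversed on $[\ell,2\ell]$, and $\gamma_+ = g\circ\gamma_-$ with $g$ an involution. Also $\theta(0)=\gamma_+(0)$ and $\theta(\ell)=\gamma_+(\ell)=g(\gamma_+(0))$, which is consistent with the concatenation closing up. Note that $g$ is an isometry of $(\S^2,d)$, since $d$ is lifted from $d_X$ and $g$ is a deck transformation.

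For the lower bound, fix $s,t$ and suppose for contradiction that $d(\theta(s),\theta(t)) < d_{\mathrm{int}}(s,t) =: \delta \leq \ell$. Without loss of generality $0 \leq t-s = \delta \leq \ell$. I build a curve from $\theta(s)$ to $g(\theta(s)) = \theta(s+\ell)$ as follows. Take a path of length $<\delta$ from $\theta(s)$ to $\theta(t)$, which exists since $d$ is a length metric. Then follow $\theta$ from $t$ to $s+\ell$, which has length $\ell-\delta$. The total length is $<\ell$, so $d(\theta(s),g(\theta(s)))<\ell$. This contradicts \Cref{lemma-description-of-the-systole}, which gives $\min_x d(x,g(x)) = \sys(X,d_X)=\ell$.

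The step I expect to need care is this one: checking that the arc of $\theta$ from $t$ to $s+\ell$ really has length $\ell-\delta$ with the chosen orientation, and that $t \leq s+\ell$ holds in all cases (the case $\delta$ realized by the other arc is handled by swapping $s,t$). Injectivity then follows, since $d(\theta(s),\theta(t))=d_{\mathrm{int}}(s,t)>0$ for $s\neq t$. Hence $\mathbb{S}^1(\ell/\pi)\to(\S^2,d)$, $e^{i\pi t/\ell}\mapsto\theta(t)$, is the desired isometric embedding.
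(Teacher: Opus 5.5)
Your argument is correct and is essentially the paper's proof. The paper extends $\theta$ periodically and uses $g(\bar\theta(s))=\bar\theta(s+\ell)$ together with $\min_x d(x,g(x))=\ell$ to conclude that every arc $\bar\theta|_{[s,s+\ell]}$ is a shortest curve from $\bar\theta(s)$ to $g(\bar\theta(s))$, hence $d(\bar\theta(s),\bar\theta(t))=t-s$ for $t\in(s,s+\ell]$; your contradiction argument (a shortcut from $\theta(s)$ to $\theta(t)$ followed by the arc of $\theta$ to $\theta(s+\ell)$) is just the explicit verification that subarcs of this minimizer are minimizing.
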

    \begin{proof}
        We extend $\theta$ periodically to $\bar{\theta} \colon \R \to ( \S^2, d )$. It follows from the construction of $\theta$ that $g(\bar \theta(s))= \bar\theta(s+\ell)$ for all $s\in \R$. By \Cref{lemma-description-of-the-systole} and \eqref{equation-systole-double-loop}, this implies that for all $s\in \R$, the subcurve $\bar \theta|_{[s,s+\ell]}$ is a length minimizing curve between $\bar\theta(s)$ and $g(\bar\theta(s))$. Therefore, for all $s\in \R$ and $t\in (s,s+\ell]$, we have $d( \bar{\theta}(s), \bar{\theta}(t) ) = t-s$. In particular, there exists an isometry $( \mathbb{S}^{1}( \ell/\pi ), d_{\mathrm{int}} ) \to ( \S^2, d )$ as claimed.
    \end{proof}
    
    Let $\S_{+}$ be one of the complementary components of the image of $\theta$. Since $\pi|_{ \S_+ } \colon \S_+ \to X \setminus \pi(\theta)$ is a bijective local isometry, we have that $\cH_d^2(\S_+) $ is equal to the Hausdorff $2$-measure of $X$. Moreover, after rescaling the metric on $X$ such that the systole of $X$ is equal to $\pi$, we may suppose that $\S_+$ is a metric disk with boundary isometric to $( \mathbb{S}^{1}, d_{\mathrm{int}} )$. Hence, \Cref{proposition-filling-minimality-of-circles} implies
    $$\textup{Area}(X,d_X) = \cH^2_d(\S_+) \geq 2\pi = \frac{2}{\pi}\sys(X,d_X)^2.$$
    This concludes the proof of \Cref{theorem-main-result-real-projective-plane}.

\end{document}